\newtheorem{theorem}{Theorem}[section]
\newtheorem{lemma}[theorem]{Lemma}
\newtheorem{corollary}[theorem]{Corollary}
\theoremstyle{definition}
\newtheorem{definition}[theorem]{Definition}
\newtheorem{example}[theorem]{Example}
\theoremstyle{remark}
\numberwithin{equation}{section}
\begin{document}

\title [{On graded $A$-2-absorbingsubmodules}]{On graded $A$-2-absorbing submodules of graded modules over graded commutative rings }

 \author[{{K. Al-Zoubi and S. Al-kaseasbeh }}]{\textit{Khaldoun Al-Zoubi and Saba Al-Kaseasbeh  }}

\address
{\textit{Khaldoun Al-Zoubi, Department of Mathematics and
Statistics, Jordan University of Science and Technology, P.O.Box
3030, Irbid 22110, Jordan.}}
\bigskip
{\email{\textit{kfzoubi@just.edu.jo}}}

\address
{\textit{Saba Al-Kaseasbeh, Department of Mathematics, Tafila Technical University, Jordan.}}
\bigskip
{\email{\textit{saba.alkaseasbeh@gmail.com}}}

 \subjclass[2010]{13A02, 16W50.}

\date{}
\begin{abstract}
Let $G$ be an  abelian group with identity $e$. Let $R$ be a $G$-graded commutative ring, $M$ a graded $R$-module and $A\subseteq h(R)$ a multiplicatively
closed subset of $R$. In this paper, we introduce the concept of graded $A$-2-absorbing submodules of $M$ as a generalization of graded 2-absorbing
submodules and graded $A$-prime submodules of $M.$ We investigate some
properties of this class of graded submodules.
\end{abstract}

\keywords{graded $A$-2-absorbing submodules, graded $A$-prime submodules, graded 2-absorbing submodules. }
%$*$ Corresponding author}
 \maketitle
%----------------------------------------------------------------------
%----------------------------------------------------------------------
%                Sec1:   Introduction
%----------------------------------------------------------------------
\section{Introduction and Preliminaries}
Throughout this paper all rings are commutative with identity and all
modules are unitary.
Badawi in \cite{16} introduced the concept of 2-absorbing ideals of
commutative rings. The notion of 2-absorbing ideals was extended to
2-absorbing submodules in \cite{18} and \cite{25}.
The concept of $A$-2-absorbing submodules, as a generalization of 2-absorbing
submodules, was introduced in \cite{26} and studied in \cite{19}.

In \cite{27}, Refai and Al-Zoubi introduced the concept of graded primary ideal. The concept of graded 2-absorbing ideals, as a generalization of graded prime ideals, was introduced and studied by Al-Zoubi, Abu-Dawwas and Ceken in \cite{4}. The concept of graded prime submodules was introduced and studied by many authors, see for example \cite{1, 2, 10, 11, 12, 13, 14, 24}.
The concept of graded 2-absorbing submodules, as a generalization of graded prime
submodules, was introduced by Al-Zoubi and Abu-Dawwas in \cite{3} and
studied in \cite{8, 9}. Then many generalizations of graded 2-absorbing submodules were studied such
as graded 2-absorbing primary (see \cite{17}), graded weakly
2-absorbing primary (see \cite{7}) and graded classical 2-absorbing
submodule (see \cite{6}).

\bigskip
Recently, Al-Zoubi and Al-Azaizeh in \cite{5} introduced the concept of
graded $A$-prime submodule over a commutative graded ring as a new
generalization of graded prime submodule.
The main purpose of this paper is to introduced the notion of graded $A$-2-absorbing submodules over a commutative graded ring as a new
generalization of graded 2-absorbing submodules and graded $A$-prime
submodules. A number of results concerning of these classes of graded
submodules and their homogeneous components are given.

First, we recall some basic properties of graded rings and modules which
will be used in the sequel. We refer to \cite{20}, \cite{21}, \cite{22} and
\cite{23} for these basic properties and more information on graded rings
and modules.

Let $G$ be an abelian multiplicative group with identity element $e.$ A ring $R$ is
called \textit{a graded ring (or }$G$\textit{-graded ring) }if there exist
additive subgroups $R_{g}$ of $R$ indexed by the elements $g\in G$ such that
$R=\oplus _{g\in G}R_{g}$ and $R_{g}R_{h}\subseteq R_{gh}$ for all $g,h\in G$. The non-zero elements
of $R_{g}$ are said to be homogeneous of degree $g$ and
all the homogeneous elements are denoted by $h(R)$, i.e. $h(R)=\cup _{g\in
G}R_{g}$. If $r\in R$, then $r$ can be written uniquely as $\sum_{g\in
G}r_{g}$, where $r_{g}$ is called \textit{a homogeneous component of }$r$%
\textit{\ in }$R_{g}$\textit{.} Moreover, $R_{e}$ is a subring of $R$ and $%
1\in R_{e}$ (see \cite{23}).

Let $R=\oplus _{g\in G}R_{g}$ be a $G$-graded ring. An ideal $I$
of $R$ is said to be a \emph{graded ideal} if $I=\oplus _{g\in G}I_{g}$
where $I_{g}=I\cap R_{g}$ for all $g\in G$ (see \cite{23}).

Let $R=\oplus _{h\in G}R_{h}$ be a $G$-graded ring. A left $R$-module $M$ is
said to be \textit{a graded }$R$\textit{-module (or }$G$\textit{-graded }$R$%
\textit{-module)} if there exists a family of additive subgroups $%
\{M_{h}\}_{h\in G}$ of $M$ such that $M=\oplus _{h\in G}M_{h}$ and $%
R_{g}M_{h}\subseteq M_{gh}$ for all $g,h\in G$. Also if an element of $M$
belongs to $\cup _{h\in G}M_{h}=:h(M)$, then it is called \textit{a
homogeneous}. Note that $M_{h}$ is an $R_{e}$-module for every $h\in G$.
Let $M=\oplus _{h\in G}M_{h}$ be a $G$-graded $R$-module. A submodule $%
C$ of $M$ is said to be a \textit{graded submodule of M} if $C=\oplus _{h\in
G}C_{h}$ where $C_{h}=C\cap M_{h}$ for all $h\in G$. In this case, $C_{h}$
is called the $h$\textit{-component of }$C$ (see \cite{23}).

Let $R$ be a $G$-graded ring, $M$ a graded $R$-module, $C$ be a graded
submodule of $M$ and $I$ a graded ideal of $R$. Then $(C:_{R}M)$ is defined as $(C:_{R}M)=\{r\in R:rM\subseteq C\}.$ It is
shown in \cite{14} that if $C$ is a graded submodule of $M$, then $(C:_{R}M)$
is a graded ideal of $R$. The graded submodule $\{m\in M:mI\subseteq C\}$
will be denoted by $(C:_{M}I).$
%---------------------------------------------------------------------

%                Sec2:
%----------------------------------------------------------------------

 \section{RESULTS}

%------------------------------------------------------------------------------
%-----------------------------Definition  2.1------------------------------------
\begin{definition}
 Let $R$ be a $G$-graded ring, $M$ a graded $R$-module, $A\subseteq h(R)$ a multiplicatively closed subset of $R$ and $C$ a
graded submodule of $M$ such that $(C:_{R}M)\bigcap A=\emptyset .$ We say
that $C$ is \textit{a graded }$A$\textit{-2-absorbing submodule of }$M$ if there exists a fixed $a_{\alpha }\in A$ and whenever $
r_{g}s_{h}m_{\lambda }\in C,$ where $r_{g}$, $s_{h}\in h(R)$ and $m_{\lambda
}\in h(M),$ implies that $a_{\alpha }r_{g}s_{h}\in (C:_{R}M)$ or $a_{\alpha
}r_{g}m_{\lambda }\in C$ or $a_{\alpha }s_{h}m_{\lambda }\in C.$ In
particular, a graded ideal $J$ of $R$ is said to be \textit{a graded $A$-2-absorbing ideal} if $J$ is a graded $A$-2-absorbing submodule
of the graded $R$-module $R$.
\end{definition}

%_----------------------------------------------------------------------------------------------------------
%-------------------------Example 2.2 -----------------------------------------------------------------------
Recall from \cite{3} that a proper graded submodule $C$ of a $G$-graded $R$%
-module $M$ is said to be \emph{a graded 2-absorbing submodule of $M$} if whenever $%
r_{g},s_{h}\in h(R)$ and $m_{\lambda }\in h(M)$ with $r_{g}s_{h}m_{\lambda
}\in N$, then either $r_{g}m_{\lambda }\in C$ or $s_{h}m_{\lambda }\in C$ or
$r_{g}s_{h}\in (C:_{R}M)$.

It is easy to see that every graded 2-absorbing submodule $C$ of $M$ with $
(C:_{R}M)\bigcap A=\emptyset $ is a graded $A$-$2$-absorbing submodule. The
following example shows that the converse is not true in general.

\begin{example}
Let $G=(\mathbb{Z},+)$ and $R=(\mathbb{Z},+,.)$. Define $R_{g}=\left\{
\begin{array}{c}
\mathbb{Z}\ \text{\ \ \ \ \ if \ }g=0 \\
0\text{ \ \ otherwise }
\end{array}%
\ \right\} .$ Then $R$ is a $G$-graded ring. Let $M=$ $
%TCIMACRO{\U{2124} }%
%BeginExpansion
\mathbb{Z}
%EndExpansion
\times
%TCIMACRO{\U{2124} }%
%BeginExpansion
\mathbb{Z}
%EndExpansion
_{6}.$ Then $M$ is a $G$-graded $R$-module with $M_{g}=\ \left\{ \
\begin{array}{c}
%TCIMACRO{\U{2124} }%
%BeginExpansion
\mathbb{Z}
%EndExpansion
\times \{\bar{0}\}\text{\ \ \ if }g=0 \\
\{0\}\times
%TCIMACRO{\U{2124} }%
%BeginExpansion
\mathbb{Z}
%EndExpansion
_{6}\text{\ \ \ if }g=1 \\
\{0\}\times \{\bar{0}\}\text{\ \ \ \ otherwise\ }%
\end{array} \right\}
 .$ Now, consider the zero graded submodule $C=\{0\}\times \{\bar{0}\}$ of $M.$
Then $C$ is not a graded 2-absorbing submodule of $M$ since $2\cdot 3\cdot
(0,\bar{1})=(0,\bar{0})\in C$, but $2\cdot 3\notin (C:_{%
%TCIMACRO{\U{2124} }%
%BeginExpansion
\mathbb{Z}
%EndExpansion
}M)=\{0\}$, $\ 2\cdot (0,\bar{1})=(0,\bar{2})\notin C$ and $3\cdot (0,\bar{1}%
)=(0,\bar{3})\notin C.$ Let $A=%
%TCIMACRO{\U{2124} }%
%BeginExpansion
\mathbb{Z}
%EndExpansion
-\{0\}\subseteq
%TCIMACRO{\U{2124} }%
%BeginExpansion
\mathbb{Z}
%EndExpansion
_{0}\subseteq h(R)$ be a multiplicatively closed subset of $R$ and put $%
a_{g}=6\in A.$ However an easy computation shows that $C\ $is a graded $A$%
-2-absorbing submodule of $M.$
\end{example}
%_----------------------------------------------------------------------------------------------------------
%-------------------------Theorem 2.3 -----------------------------------------------------------------------
Let $R$ be a $G$-graded ring, $M$ a graded $R$-module and $A\subseteq h(R)$
be a multiplicatively closed subset of $R.$ Then, $A^{\ast }=\{a_{g}\in h(R):
$ $\frac{a_{g}}{1}$ is a unit of $A^{-1}R\}$ is a multiplicatively closed
subset of $R$ containing $A.$

\begin{theorem}
Let $R$ be a $G$-graded ring, $M$ a graded $R$-module
and $A\subseteq h(R)$ a multiplicatively closed subset of $R.$ Then the
following statements hold:
 \begin{enumerate}[\upshape (i)]
   \item Suppose that $A_{1}\subseteq A_{2}\subseteq h(R)$  are two multiplicatively closed subsets
of $R.$ If $C$ is a graded $A_{1}$-$2$-absorbing\ submodule and $(C:_{R}M)\bigcap A_{2}=\emptyset $, then $C$ is a graded $A_{2}$-2-absorbing submodule of $M$.
   \item A graded submodule $C$ of $M$ is a graded $A$-2-absorbing submodule
of $M$ if and only if it is a graded $A^{\ast }$-2-absorbing submodule of $M.$
 \end{enumerate}
\end{theorem}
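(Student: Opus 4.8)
For part (i), the plan is to use the witness element $a_{\alpha}\in A_{1}$ from the hypothesis that $C$ is graded $A_{1}$-$2$-absorbing and to check that this same element works for $A_{2}$. Since $A_{1}\subseteq A_{2}$, we have $a_{\alpha}\in A_{2}$, and the disjointness condition $(C:_{R}M)\cap A_{2}=\emptyset$ is given by hypothesis, so the only thing to verify is the defining implication: for $r_{g},s_{h}\in h(R)$ and $m_{\lambda}\in h(M)$ with $r_{g}s_{h}m_{\lambda}\in C$, one of $a_{\alpha}r_{g}s_{h}\in(C:_{R}M)$, $a_{\alpha}r_{g}m_{\lambda}\in C$, $a_{\alpha}s_{h}m_{\lambda}\in C$ holds. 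But this is exactly what membership of $C$ in the graded $A_{1}$-$2$-absorbing class gives us, verbatim, since the fixed element is the same. So part (i) is essentially immediate once the statement is unwound.

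For part (ii), the nontrivial direction is that a graded $A^{\ast}$-$2$-absorbing submodule is graded $A$-$2$-absorbing; the other direction follows from part (i) applied to $A\subseteq A^{\ast}$, provided we first check $(C:_{R}M)\cap A^{\ast}=\emptyset$. To see that, suppose $b_{g}\in(C:_{R}M)\cap A^{\ast}$; then $b_{g}/1$ is a unit in $A^{-1}R$, so there is $a_{\beta}\in A$ with $a_{\beta}\in b_{g}R\cdot(\text{something})$, more precisely $a_{\beta} t_{k} = b_{g} u_{l}$ for suitable homogeneous $t_{k}, u_{l}$ with $u_l \in R$ (using that inverting $b_g/1$ means $b_g$ divides some element of $A$ up to an $A$-denominator); since $(C:_{R}M)$ is an ideal containing $b_{g}$, it then contains $a_{\beta}$, contradicting $(C:_{R}M)\cap A=\emptyset$. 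Hence $A\subseteq A^{\ast}$ is legitimate and part (i) yields that graded $A$-$2$-absorbing implies graded $A^{\ast}$-$2$-absorbing.

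For the forward direction of (ii), assume $C$ is graded $A$-$2$-absorbing with fixed witness $a_{\alpha}\in A$; I claim $a_{\alpha}$ (viewed in $A^{\ast}$, which contains $A$) also witnesses that $C$ is graded $A^{\ast}$-$2$-absorbing. Again the disjointness $(C:_{R}M)\cap A^{\ast}=\emptyset$ was just established, and the defining implication for the triple $r_{g}s_{h}m_{\lambda}\in C$ is inherited directly from the $A$-$2$-absorbing property since the element $a_{\alpha}$ is unchanged. Thus both inclusions hold with the same fixed witness, and we are done.

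The main obstacle is the one genuinely substantive point: verifying $(C:_{R}M)\cap A^{\ast}=\emptyset$, i.e. translating "$b_{g}/1$ is a unit in $A^{-1}R$" into "$b_{g}$ divides a homogeneous element of $A$" and then using the ideal property of $(C:_{R}M)$ to derive the contradiction. Care is needed to keep all intermediate elements homogeneous (so that the expressions $b_{g}/1$, $a_{\beta}/1$ make sense in the graded localization $A^{-1}R$ and the degrees multiply correctly); once that bookkeeping is done, everything else is a direct appeal to part (i) and to the fixed-witness formulation of the definition.
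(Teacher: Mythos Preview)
Your treatment of part (i) and of the implication ``graded $A$-2-absorbing $\Rightarrow$ graded $A^{\ast}$-2-absorbing'' (via part (i), after checking $(C:_{R}M)\cap A^{\ast}=\emptyset$) matches the paper. However, there is a genuine gap: you correctly flag the direction ``graded $A^{\ast}$-2-absorbing $\Rightarrow$ graded $A$-2-absorbing'' as the nontrivial one, but you never prove it. Your third paragraph again starts from a witness $a_{\alpha}\in A$ and shows $C$ is $A^{\ast}$-2-absorbing; that is the easy direction a second time, not the converse. The sentence ``both inclusions hold with the same fixed witness'' is false: if $C$ is only assumed $A^{\ast}$-2-absorbing, the given witness $a^{\ast}_{i}$ lies in $A^{\ast}$ and need not lie in $A$, so it cannot serve as the $A$-witness.

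What the paper does for this direction is exactly to manufacture an element of $A$ from $a^{\ast}_{i}\in A^{\ast}$. Since $a^{\ast}_{i}/1$ is a unit in $A^{-1}R$, there exist $t_{j}\in h(R)$ and $a_{k},b_{l}\in A$ with $b_{l}a_{k}=b_{l}a^{\ast}_{i}t_{j}$. Then for any $r_{g}s_{h}m_{\lambda}\in C$, multiplying each of the three $A^{\ast}$-conclusions by $b_{l}t_{j}$ turns $a^{\ast}_{i}$ into $b_{l}a_{k}\in A$; e.g.\ $a^{\ast}_{i}r_{g}m_{\lambda}\in C$ gives $(b_{l}a_{k})r_{g}m_{\lambda}=b_{l}t_{j}a^{\ast}_{i}r_{g}m_{\lambda}\in C$. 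Thus $b_{l}a_{k}\in A$ is the required fixed $A$-witness. A smaller point: in your disjointness argument you write $a_{\beta}t_{k}=b_{g}u_{l}$ and then conclude $a_{\beta}\in(C:_{R}M)$; only $a_{\beta}t_{k}\in(C:_{R}M)$ follows, so you need $t_{k}\in A$ (which is precisely what the equality-in-$A^{-1}R$ gives: $b_{j}a_{i}=b_{j}b_{g}s_{h}$ with $b_{j},a_{i}\in A$, hence $b_{j}a_{i}\in(C:_{R}M)\cap A$).
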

\begin{proof}
$(i)$ It is clear.

\bigskip

$(ii)$ Assume that $C$ is a graded $A$-2-absorbing submodule of $M.$ First,
we want to show that $(C:_{R}M)\bigcap A^{\ast }=\emptyset .$ Suppose there exists $r_{g}\in (C:_{R}M)\bigcap A^{\ast }.$ Then $%
\frac{r_{g}}{1}$ is a unit of $A^{-1}R,$ it follows that there
exist $s_{h}\in h(R)$ and $a_{i}\in A$ such that $\frac{r_{g}}{1}\frac{s_{h}%
}{a_{i}}=1.$ Hence $b_{j}a_{i}=b_{j}r_{g}s_{h}$ for some $b_{j}\in A.$ So $\
b_{j}a_{i}=b_{j}r_{g}s_{h}\in (C:_{R}M)\bigcap A,$ a contradiction.
Therefore $(C:_{R}M)\bigcap A^{\ast }=\emptyset .$ Then by $(i)$, we get $C$
is a graded $A^{\ast }$-2-absorbing submodule of $M$ since $A\subseteq
A^{\ast }.$ Conversely, suppose that $C$ is a graded $A^{\ast }$-2-absorbing
submodule of $M.$ Let $r_{g}s_{k}m_{h}\in C$ for some $r_{g},$ $s_{k}\in h(R)
$ and $m_{h}\in h(M).$ Then there is a fixed $a_{i}^{\ast }\in A^{\ast }$ such
that $a_{i}^{\ast }r_{g}s_{k}\in (C:_{R}M)$ or $a_{i}^{\ast }r_{g}m_{h}\in C$
or $a_{i}^{\ast }s_{k}m_{h}\in C.$ Since $\frac{a_{i}^{\ast }}{1}$ is a unit
of $A^{-1}R,$ there exist $t_{j}\in h(R)$ and $a_{k},b_{l}\in A$ such that $%
b_{l}a_{k}=b_{l}a_{i}^{\ast }t_{j}.$ It follows that either $%
(b_{l}a_{k})r_{g}s_{k}=b_{l}a_{i}^{\ast }t_{j}r_{g}s_{k}\in (C:_{R}M)$ or $%
(b_{l}a_{k})r_{g}m_{h}=b_{l}a_{i}^{\ast }t_{j}r_{g}m_{h}\in C$ or $%
(b_{l}a_{k})s_{k}m_{h}=b_{l}a_{i}^{\ast }t_{j}s_{k}m_{h}\in C.$ Therefore, $C
$ is a graded $A$-2-absorbing submodule of $M.$
\end{proof}
%_----------------------------------------------------------------------------------------------------------
%-------------------------Theorem 2.4 -----------------------------------------------------------------------

\begin{theorem}
Let $R$ be a $G$-graded ring, $M$ a graded $R$-module and $A\subseteq h(R)$ a multiplicatively closed subset of $R.$ If $C$ is a graded $A$-2-absorbing submodule of $M$, then $A^{-1}C$ is
a graded 2-absorbing submodule of $A^{-1}M.$
\end{theorem}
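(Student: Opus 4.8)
The plan is to verify the definition of graded 2-absorbing submodule directly for $A^{-1}C$ inside $A^{-1}M$, using the fixed element $a_\alpha\in A$ that witnesses the graded $A$-2-absorbing property of $C$. First I would record the standard facts about graded localization: $A^{-1}R$ is a $G$-graded ring with homogeneous elements of the form $r_g/a_i$ with $r_g\in h(R)$, $a_i\in A\cap h(R)$; similarly $A^{-1}M$ is a graded $A^{-1}R$-module with $h(A^{-1}M)$ consisting of the $m_\lambda/a_i$; $A^{-1}C$ is a graded submodule of $A^{-1}M$; and $(A^{-1}C:_{A^{-1}R}A^{-1}M)=A^{-1}(C:_R M)$. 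I would also note that $A^{-1}C$ is proper, i.e. $(A^{-1}C:_{A^{-1}R}A^{-1}M)\neq A^{-1}R$, because $(C:_R M)\cap A=\emptyset$ forces $A^{-1}(C:_R M)$ to be a proper graded ideal of $A^{-1}R$.

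Next I would take homogeneous elements $r_g/a_i,\ s_h/a_j\in h(A^{-1}R)$ and $m_\lambda/a_k\in h(A^{-1}M)$ with $(r_g/a_i)(s_h/a_j)(m_\lambda/a_k)\in A^{-1}C$. By definition of localization of a submodule there is some $u\in A$ (homogeneous) with $u\,r_g s_h m_\lambda\in C$; writing $u r_g = (ur_g)$ as a homogeneous element, this says $(ur_g)s_h m_\lambda\in C$ with all three factors homogeneous. Applying the graded $A$-2-absorbing hypothesis to $C$ with the fixed $a_\alpha\in A$, one of three things holds: $a_\alpha (ur_g)s_h\in(C:_R M)$, or $a_\alpha(ur_g)m_\lambda\in C$, or $a_\alpha s_h m_\lambda\in C$. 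In the first case I get $a_\alpha u r_g s_h\in (C:_RM)$, hence $(r_g/a_i)(s_h/a_j)=\dfrac{a_\alpha u r_g s_h}{a_\alpha u a_i a_j}\in A^{-1}(C:_R M)=(A^{-1}C:_{A^{-1}R}A^{-1}M)$, since $a_\alpha u$ is a unit in $A^{-1}R$. Similarly the second case gives $(r_g/a_i)(m_\lambda/a_k)\in A^{-1}C$ and the third gives $(s_h/a_j)(m_\lambda/a_k)\in A^{-1}C$, again because multiplying numerator and denominator by the unit $a_\alpha u$ leaves the element unchanged in the localization.

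The main subtlety — not hard, but the step that needs care — is the bookkeeping with the extra clearing-denominator element $u\in A$: one must make sure that after multiplying by $u$ the products $ur_g s_h m_\lambda$ (and then by the witness $a_\alpha$) remain genuinely of the form (homogeneous)$\cdot$(homogeneous)$\cdot$(homogeneous) so that the graded hypothesis on $C$ applies, and then that in passing back to $A^{-1}M$ the denominators $a_\alpha u a_i a_j$ (resp. $a_\alpha u a_i a_k$, $a_\alpha u a_j a_k$) lie in $A$, which they do since $A$ is multiplicatively closed. I would close by remarking that $A^{-1}M$ is generated by its homogeneous elements over $A^{-1}R$, so checking the 2-absorbing condition on homogeneous triples suffices, and hence $A^{-1}C$ is a graded 2-absorbing submodule of $A^{-1}M$, completing the proof.
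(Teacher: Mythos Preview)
Your argument is correct and essentially identical to the paper's: take homogeneous fractions whose product lies in $A^{-1}C$, clear denominators to obtain a product of homogeneous elements in $C$, apply the fixed witness $a_\alpha$ from the graded $A$-2-absorbing hypothesis, and translate each of the three resulting cases back to the localization. One small caveat: the equality $(A^{-1}C:_{A^{-1}R}A^{-1}M)=A^{-1}(C:_R M)$ that you record as a ``standard fact'' need not hold unless $M$ is finitely generated; both your Case~1 and the paper use only the always-valid inclusion $A^{-1}(C:_R M)\subseteq (A^{-1}C:_{A^{-1}R}A^{-1}M)$, so weaken your preliminary claim to that inclusion (your properness argument then needs a separate justification, though the paper does not address properness either).
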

\begin{proof}
Assume that $C$ is a graded $A$-2-absorbing submodule of $M.$ Let $\frac{%
r_{g_{1}}}{a_{h_{1}}}\frac{s_{g_{2}}}{a_{h_{2}}}\in $ $h(A^{-1}R)$ and $%
\frac{m_{g_{3}}}{a_{h_{3}}}\in h(A^{-1}M)$ such that $\frac{r_{g_{1}}}{%
a_{h_{1}}}\frac{s_{g_{2}}}{a_{h_{2}}}\frac{m_{g_{3}}}{a_{h_{3}}}\in A^{-1}C.$
Then, there exists $a_{h_{4}}\in A$ such that $%
(a_{h_{4}}r_{g_{1}})s_{g_{2}}m_{g_{3}}\in C.$ As $C$ is a graded $A$%
-2-absorbing submodule of $M,$ there is a fixed $a_{h_{5}}\in A$ such that $%
a_{h_{5}}(a_{h_{4}}r_{g_{1}})s_{g_{2}}\in (C:_{R}M)$ or $%
a_{h_{5}}(a_{h_{4}}r_{g_{1}})m_{g_{3}}\in C$ or $%
a_{h_{5}}s_{g_{2}}m_{g_{3}}\in C.$ Hence, we get either $\frac{r_{g_{1}}}{%
a_{h_{1}}}\frac{s_{g_{2}}}{a_{h_{2}}}=\frac{a_{h_{5}}a_{h_{4}}\
r_{g_{1}}s_{g_{2}}}{a_{h_{5}}a_{h_{4}}\ a_{h_{1}}a_{h_{2}}}\in
A^{-1}(C:_{R}M)\subseteq (A^{-1}C:_{A^{-1}R}A^{-1}M)$ or $\frac{r_{g_{1}}%
}{a_{h_{1}}}\frac{m_{g_{3}}}{a_{h_{3}}}=\frac{a_{h_{5}}r_{g_{1}}m_{g_{3}}}{%
a_{h_{5}}a_{h_{1}}a_{h_{3}}}\in A^{-1}C$ or $\frac{s_{g_{2}}}{a_{h_{2}}}%
\frac{m_{g_{3}}}{a_{h_{3}}}=\frac{a_{h_{4}}s_{g_{2}}m_{g_{3}}}{%
a_{h_{4}}a_{h_{2}}a_{h_{3}}}\in A^{-1}C.$ Therefore, $A^{-1}C$ is a graded
2-absorbing submodule of $A^{-1}M.$
\end{proof}
 %----------------------------------------------------
%_----------------------------------------------------------------------------------------------------------
%-------------------------Lemma 2. 5-----------------------------------------------------------------------

\begin{lemma}
Let $R$ be a $G$-graded ring, $M$ a graded $R$-module, $%
A\subseteq h(R)$ be a multiplicatively closed subset of $R$ and $C$ a graded
$A$-2-absorbing submodule of $M.$ Let $K=\bigoplus\limits_{\lambda \in
G}K_{\lambda }$ be a graded submodule of $M.$ Then there exists a fixed $%
a_{\alpha }\in A$\ and whenever $r_{g}$, $s_{h}$ $\in h(R)$ and $\lambda \in
G$ such that $r_{g}s_{h}K_{\lambda }\subseteq C,$ then $a_{\alpha
}r_{g}K_{\lambda }\subseteq C$ or $\ a_{\alpha }s_{h}K_{\lambda }\subseteq C$
or $a_{\alpha }r_{g}s_{h}\in (C:_{R}M).$
\end{lemma}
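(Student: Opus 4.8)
The plan is to fix the element $a_{\alpha }\in A$ guaranteed by the definition of a graded $A$-$2$-absorbing submodule, and show this single $a_{\alpha }$ works uniformly for all graded submodules $K$ and all indices $\lambda \in G$. So suppose $r_{g},s_{h}\in h(R)$, $\lambda \in G$, and $r_{g}s_{h}K_{\lambda }\subseteq C$. If $a_{\alpha }r_{g}s_{h}\in (C:_{R}M)$ we are done, so assume not. The goal is then to prove $a_{\alpha }r_{g}K_{\lambda }\subseteq C$ or $a_{\alpha }s_{h}K_{\lambda }\subseteq C$; the obstruction, exactly as in the classical (ungraded) case, is that a priori different elements $m_{\lambda }\in K_{\lambda }$ could land in different ``branches'' of the $A$-$2$-absorbing trichotomy, so one cannot simply apply the definition elementwise.

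To handle this I would argue by contradiction: assume $a_{\alpha }r_{g}K_{\lambda }\not\subseteq C$ and $a_{\alpha }s_{h}K_{\lambda }\not\subseteq C$. Pick $m_{\lambda },n_{\lambda }\in K_{\lambda }$ with $a_{\alpha }r_{g}m_{\lambda }\notin C$ and $a_{\alpha }s_{h}n_{\lambda }\notin C$. Applying the definition to $r_{g}s_{h}m_{\lambda }\in C$: since $a_{\alpha }r_{g}s_{h}\notin (C:_{R}M)$ and $a_{\alpha }r_{g}m_{\lambda }\notin C$, we must have $a_{\alpha }s_{h}m_{\lambda }\in C$. Symmetrically, applying it to $r_{g}s_{h}n_{\lambda }\in C$ forces $a_{\alpha }r_{g}n_{\lambda }\in C$. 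Now consider the homogeneous element $m_{\lambda }+n_{\lambda }\in K_{\lambda }\subseteq h(M)$ (it is homogeneous of degree $\lambda$, and it lies in $K_{\lambda}$ since $K_{\lambda}$ is a subgroup). Then $r_{g}s_{h}(m_{\lambda }+n_{\lambda })\in C$, so again by the definition one of three things holds for this element. The first option, $a_{\alpha }r_{g}s_{h}\in (C:_{R}M)$, is excluded by assumption. If $a_{\alpha }r_{g}(m_{\lambda }+n_{\lambda })\in C$, then since $a_{\alpha }r_{g}n_{\lambda }\in C$ we get $a_{\alpha }r_{g}m_{\lambda }\in C$, a contradiction. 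If $a_{\alpha }s_{h}(m_{\lambda }+n_{\lambda })\in C$, then since $a_{\alpha }s_{h}m_{\lambda }\in C$ we get $a_{\alpha }s_{h}n_{\lambda }\in C$, again a contradiction. Hence the assumption fails, and $a_{\alpha }r_{g}K_{\lambda }\subseteq C$ or $a_{\alpha }s_{h}K_{\lambda }\subseteq C$, as required.

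The one point requiring a little care is the degenerate cases: if $a_{\alpha }r_{g}m_{\lambda }\in C$ for every $m_{\lambda}\in K_{\lambda}$ then $a_{\alpha}r_{g}K_{\lambda}\subseteq C$ immediately and there is nothing to prove, and similarly for $s_{h}$; so the choice of the witnesses $m_{\lambda },n_{\lambda }$ in the contradiction argument is legitimate precisely because we are in the case where both branches fail. I would also remark explicitly that $m_{\lambda }+n_{\lambda }$ remains homogeneous (of degree $\lambda $), which is what lets us feed it back into the definition of graded $A$-$2$-absorbing submodule — this is the only place the grading is used, and it goes through because $K_{\lambda }$ is an $R_e$-submodule of $M_{\lambda }$. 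With these observations the proof is the natural graded analogue of the standard ``$2$-absorbing on a submodule'' lemma, and I do not anticipate any further obstacle.
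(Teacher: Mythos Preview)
Your proof is correct and is essentially identical to the paper's own argument: fix the element $a_{\alpha}\in A$ from the definition, assume all three conclusions fail, choose witnesses $m_{\lambda},n_{\lambda}\in K_{\lambda}$ violating the first two, use the $A$-$2$-absorbing property to force $a_{\alpha}s_{h}m_{\lambda}\in C$ and $a_{\alpha}r_{g}n_{\lambda}\in C$, and then derive a contradiction by applying the definition to the homogeneous sum $m_{\lambda}+n_{\lambda}\in K_{\lambda}$. The only cosmetic difference is that the paper bundles the negation of $a_{\alpha}r_{g}s_{h}\in(C:_{R}M)$ into the contradiction hypothesis from the start rather than disposing of that case first as you do.
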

\begin{proof}
 Let $r_{g}$, $s_{h}$ $\in h(R)$, and $\lambda \in G$ such that
$r_{g}s_{h}K_{\lambda }\subseteq C.$ Since $C$ is a graded $A$-2-absorbing
submodule of $M,$\ there exists $a_{\alpha }\in A$ so that $%
r_{g}s_{h}m_{\lambda }\in C$\ implies $a_{\alpha }r_{g}s_{h}\in (C:_{R}M)$
or $a_{\alpha }r_{g}m_{\lambda }\in C$ or $a_{\alpha }s_{h}m_{\lambda }\in C$ for each $r_{g},s_{h}\in h(R)$\ and $m_{\lambda }\in h(M).$ Now, we will
show that $a_{\alpha }r_{g}K_{\lambda }\subseteq C$ or $a_{\alpha
}s_{h}K_{\lambda }\subseteq C$ or $a_{\alpha }r_{g}s_{h}\in (C:_{R}M).$
Assume on the contrary that $a_{\alpha }r_{g}K_{\lambda }\nsubseteq C$, $\ a_{\alpha
}s_{h}K_{\lambda }\nsubseteq C$ and $a_{\alpha }r_{g}s_{h}\notin (C:_{R}M).
$ Then there exist $k_{\lambda \text{ }},$ $k_{\lambda }^{\prime }\in K$
such that $a_{\alpha }r_{g}k_{\lambda \text{ }}\notin C$ and $a_{\alpha
}s_{h}k_{\lambda }^{\prime }\notin C.$ Since $C$ is a graded $A$-2-absorbing
submodule of $M,$ $r_{g}s_{h}k_{\lambda \text{ }}\in C,$ $a_{\alpha
}r_{g}k_{\lambda } \notin C$ and $a_{\alpha }r_{g}s_{h}\notin
(C:_{R}M),$ we get $a_{\alpha }s_{h}k_{\lambda \text{ }}\in C$. In a
similar manner, we get $a_{\alpha }r_{g}k_{\lambda }^{\prime }\in C.$ By $%
k_{\lambda}+k_{\lambda }^{\prime }\in K_{\lambda }\subseteq h(M)$
it follows that $r_{g}s_{h}(k_{\lambda}+k_{\lambda }^{\prime
})\in C.$ Since $C$ is a graded $A$-2-absorbing submodule of $M$ and $%
a_{\alpha }r_{g}s_{h}\notin (C:_{R}M),$ we have either $ a_{\alpha
}r_{g}(k_{\lambda}+k_{\lambda }^{\prime })\in C$ or $ a_{\alpha
}s_{h}(k_{\lambda}+k_{\lambda }^{\prime })\in C.$ If $a_{\alpha
}r_{g}(k_{\lambda}+k_{\lambda }^{\prime })=a_{\alpha
}r_{g}k_{\lambda}+a_{\alpha }r_{g}k_{\lambda }^{\prime }\in C,$
then we get $a_{\alpha }r_{g}k_{\lambda \text{ }}\in C$ since $a_{\alpha
}r_{g}k_{\lambda }^{\prime }\in C,$ a contradiction. If $\ a_{\alpha
}s_{h}(k_{\lambda}+k_{\lambda }^{\prime })=a_{\alpha
}s_{h}k_{\lambda}+a_{\alpha }s_{h}k_{\lambda }^{\prime }\in C,$
then we get $a_{\alpha }s_{h}k_{\lambda }^{\prime }\in C$ since $a_{\alpha
}s_{h}k_{\lambda}\in C,$ a contradiction. Therefore $a_{\alpha
}r_{g}K_{\lambda }\subseteq C$ or $ a_{\alpha }s_{h}K_{\lambda }\subseteq C$
or  $a_{\alpha }r_{g}s_{h}\in (C:_{R}M).$

\end{proof}
 %----------------------------------------------------

%_----------------------------------------------------------------------------------------------------------
%-------------------------Theorem 2.6 -----------------------------------------------------------------------

\begin{theorem}
Let $R$ be a $G$-graded ring, $M$ a graded $R$-module, $C$ a graded submodule\emph{\ }of $M$ and $A\subseteq h(R)$ be a multiplicatively closed subset of $R$ with $(C:_{R}M)\bigcap A=\emptyset.$ Let $I=$\ $\bigoplus\limits_{g\in G}I_{g}$ and $J=$\ $\bigoplus\limits_{h\in
G}J_{h}$ be a graded ideals of $R$ and $K=\bigoplus\limits_{\lambda \in
G}K_{\lambda }$ a graded submodule of $M.$ Then the following statements are
equivalent:
 \begin{enumerate}[\upshape (i)]
   \item $C$ is a graded $A$-2-absorbing submodule of $M;$
   \item There exists a fixed $a_{\alpha }\in A$\ such that $
I_{g}J_{h}K_{\lambda }\subseteq C$  for some $g,h,\lambda \in G$ implies
either $a_{\alpha }I_{g}K_{\lambda }\subseteq C$ or $\ a_{\alpha
}J_{h}K_{\lambda }\subseteq C$ or $a_{\alpha }I_{g}J_{h}\subseteq
(C:_{R}M).$
 \end{enumerate}
\end{theorem}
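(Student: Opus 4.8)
The plan is to prove the equivalence in two directions, with the forward direction $(i)\Rightarrow(ii)$ being essentially three successive applications of the preceding Lemma, and the reverse direction $(ii)\Rightarrow(i)$ being almost immediate by specializing the ideals and submodule to principal objects.

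For $(ii)\Rightarrow(i)$, suppose $r_g s_h m_\lambda \in C$ with $r_g,s_h\in h(R)$ and $m_\lambda\in h(M)$. I would take $I_g = R_e r_g$ (a graded ideal concentrated in degree $g$ in the sense that its only possibly-nonzero homogeneous component lies in $R_g$), $J_h = R_e s_h$, and $K_\lambda = R_e m_\lambda$, so that $I_g J_h K_\lambda \subseteq R_e r_g s_h m_\lambda \subseteq C$. Applying (ii) yields the fixed $a_\alpha\in A$ with one of $a_\alpha I_g K_\lambda\subseteq C$, $a_\alpha J_h K_\lambda\subseteq C$, or $a_\alpha I_g J_h\subseteq (C:_R M)$; since $r_g\in I_g$, $s_h\in J_h$, $m_\lambda\in K_\lambda$ (using $1\in R_e$), this gives respectively $a_\alpha r_g m_\lambda\in C$, $a_\alpha s_h m_\lambda\in C$, or $a_\alpha r_g s_h\in (C:_R M)$, which is exactly the defining condition for $C$ to be a graded $A$-2-absorbing submodule.

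For $(i)\Rightarrow(ii)$, I would first invoke the Lemma to obtain the fixed $a_\alpha\in A$ with the property that $r_g s_h K_\lambda\subseteq C$ implies $a_\alpha r_g K_\lambda\subseteq C$ or $a_\alpha s_h K_\lambda\subseteq C$ or $a_\alpha r_g s_h\in(C:_R M)$. Now suppose $I_g J_h K_\lambda\subseteq C$. Fix $s_h\in J_h$ arbitrarily; then for every $r_g\in I_g$ we have $r_g s_h K_\lambda\subseteq C$, so by the Lemma, for each such $r_g$ at least one of $a_\alpha r_g K_\lambda\subseteq C$, $a_\alpha s_h K_\lambda\subseteq C$, $a_\alpha r_g s_h\in(C:_R M)$ holds. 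If $a_\alpha s_h K_\lambda\subseteq C$ we would like to conclude $a_\alpha J_h K_\lambda\subseteq C$ after also handling the other $s_h$'s; otherwise, every $r_g\in I_g$ satisfies $a_\alpha r_g K_\lambda\subseteq C$ or $a_\alpha r_g s_h\in(C:_R M)$. The standard prime-avoidance-style argument then shows that for this fixed $s_h$, either $a_\alpha r_g K_\lambda\subseteq C$ for all $r_g\in I_g$ (i.e. $a_\alpha I_g K_\lambda\subseteq C$) or $a_\alpha r_g s_h\in (C:_R M)$ for all $r_g\in I_g$ (i.e. $a_\alpha I_g s_h\subseteq(C:_R M)$): if not, pick $r_g$ with $a_\alpha r_g K_\lambda\not\subseteq C$ and $r_g'$ with $a_\alpha r_g' s_h\notin(C:_R M)$, apply the dichotomy to each of $r_g,r_g',r_g+r_g'$, and derive a contradiction from additivity of the inclusions, exactly as in the Lemma's proof.

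Having fixed the outcome for each individual $s_h\in J_h$, I would run the same decomposition argument a second time over $s_h$ ranging through $J_h$: if $a_\alpha I_g K_\lambda\subseteq C$ for even one choice of $s_h$ in the relevant subcase we are done; otherwise every $s_h\in J_h$ lands in one of the remaining cases, and a third prime-avoidance-style splitting over $J_h$ collapses these into $a_\alpha J_h K_\lambda\subseteq C$ or $a_\alpha I_g J_h\subseteq(C:_R M)$. The main obstacle — and the place demanding real care — is this iterated "choose representatives from three cases, add them, reach a contradiction" bookkeeping: one must check that the additive closure arguments genuinely apply (the sums $r_g+r_g'$, $s_h+s_h'$ stay homogeneous of the same degree, which they do since $I_g$ and $J_h$ are additive subgroups, and $I_g\subseteq R_g$, $J_h\subseteq R_h$), and that the single fixed $a_\alpha$ from the Lemma suffices throughout, so that no quantifier-ordering subtlety ($\forall/\exists$ on $a_\alpha$) creeps in. Once the combinatorial skeleton is set up correctly, each individual contradiction is the same short computation already appearing in the Lemma.
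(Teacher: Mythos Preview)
Your proof is correct. For $(ii)\Rightarrow(i)$ it matches the paper's argument (the paper takes the principal graded ideals $I=r_gR$, $J=s_hR$ and the cyclic graded submodule $K=m_\lambda R$, whose $g$-, $h$-, $\lambda$-components are exactly your $R_e r_g$, $R_e s_h$, $R_e m_\lambda$).

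For $(i)\Rightarrow(ii)$ you take a genuinely different route. The paper argues by a single direct contradiction: assuming all three conclusions fail, it extracts four witness elements $x_g,y_h$ (witnessing failure of the two $K_\lambda$-inclusions) and $r_g,s_h$ (witnessing failure of the $(C:_RM)$-inclusion), then runs a three-case analysis on which of $a_\alpha r_g K_\lambda\subseteq C$ and $a_\alpha s_h K_\lambda\subseteq C$ hold, repeatedly applying Lemma~2.5 to sums such as $r_g+x_g$ and $s_h+y_h$ until each case contradicts $a_\alpha r_g s_h\notin(C:_RM)$. Your argument instead iterates: first freeze $s_h$ and, using that the sets $\{r_g\in I_g:a_\alpha r_g K_\lambda\subseteq C\}$ and $\{r_g\in I_g:a_\alpha r_g s_h\in(C:_RM)\}$ are subgroups covering $I_g$, collapse the $r_g$-variable; then repeat over $s_h\in J_h$. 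Each stage is just the elementary fact that an abelian group is not the union of two proper subgroups. Your approach is more modular and avoids the somewhat intricate case bookkeeping of the paper; the paper's approach keeps the symmetry between the two ring variables visible throughout. Both rest on the same Lemma~2.5 and the same fixed $a_\alpha$, so neither is essentially deeper than the other.
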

\begin{proof}
$(i)\Rightarrow (ii)$
Assume that $C$ is a graded $A$-2-absorbing submodule of $M$ and $%
g,h,\lambda \in G$ such that $I_{g}J_{h}K_{\lambda }\subseteq C.$ Since $C$ is a graded $A$-2-absorbing submodule of $M,$ there exists a
fixed $a_{\alpha }\in A$\ so that $r_{g}s_{h}m_{\lambda }\in C$\ implies $%
a_{\alpha }r_{g}s_{h}\in (C:_{R}M)$ or $a_{\alpha }r_{g}m_{\lambda }\in C$
or $a_{\alpha }s_{h}m_{\lambda }\in C$ \ for each $r_{g},s_{h}\in h(R)$\ and
$m_{\lambda }\in h(M).$ Now, we will show that $a_{\alpha }I_{g}K_{\lambda
}\subseteq C$ or $\ a_{\alpha }J_{h}K_{\lambda }\subseteq C$ or \ $a_{\alpha
}I_{g}J_{h}\subseteq (C:_{R}M).$ Assume on the contrary that $a_{\alpha
}I_{g}K_{\lambda }\nsubseteq C$, $\ a_{\alpha }J_{h}K_{\lambda }\nsubseteq C$
and \ $a_{\alpha }I_{g}J_{h}\nsubseteq (C:_{R}M).$ Then there exist $%
x_{g}\in I_{g}$ and $y_{h}\in J_{h}$ such that $a_{\alpha }x_{g}K_{\lambda
}\nsubseteq C$ and $a_{\alpha }y_{h}K_{\lambda }\nsubseteq C.$ Since $\
x_{g}y_{h}K_{\lambda }\subseteq C,$ by Lemma 2.5, we get $a_{\alpha
}x_{g}y_{h}\in (C:_{R}M).$ Since \ $a_{\alpha }I_{g}J_{h}\nsubseteq
(C:_{R}M),$ there exist $r_{g}\in I_{g\text{ }}$ and $s_{h}\in J_{h}$ such
that $a_{\alpha }r_{g}s_{h}\notin (C:_{R}M).$ Then by Lemma 2.5, we have $%
a_{\alpha }r_{g}K_{\lambda }\subseteq C$ or $a_{\alpha }s_{h}K_{\lambda
}\subseteq C$ since $r_{g}s_{h}K_{\lambda }\subseteq C.$ Consider the
following three cases:

\bigskip

\textbf{Case1:}  $a_{\alpha }r_{g}K_{\lambda }\subseteq C$ and $a_{\alpha
}s_{h}K_{\lambda }\nsubseteq C.$ Since $x_{g}s_{h}K_{\lambda }\subseteq C,$ $%
a_{\alpha }s_{h}K_{\lambda }\nsubseteq C$ and $a_{\alpha }x_{g}K_{\lambda
}\nsubseteq C,$ by Lemma 2.5, we get $a_{\alpha }x_{g}s_{h}\in (C:_{R}M).$
As $a_{\alpha }x_{g}K_{\lambda }\nsubseteq C$ and $a_{\alpha
}r_{g}K_{\lambda }\subseteq C,$ we have $a_{\alpha }(x_{g}+r_{g})K_{\lambda
}\nsubseteq C.$ By $(x_{g}+r_{g})\in I_{g}$ it follows that $%
(x_{g}+r_{g})s_{h}K_{\lambda }\subseteq C.$ Since $(x_{g}+r_{g})s_{h}K_{%
\lambda }\subseteq C$, $a_{\alpha }(x_{g}+r_{g})K_{\lambda }\nsubseteq C$
and $a_{\alpha }s_{h}K_{\lambda }\nsubseteq C,$ by Lemma 2.5, we get $%
a_{\alpha }(x_{g}+r_{g})s_{h}\in (C:_{R}M).$ By $a_{\alpha
}(x_{g}+r_{g})s_{h}\in (C:_{R}M)$ and $a_{\alpha }x_{g}s_{h}\in (C:_{R}M)\ $%
it follows that $a_{\alpha }r_{g}s_{h}\in (C:_{R}M)$ which is a
contradiction.

\bigskip \textbf{Case2:} $a_{\alpha }r_{g}K_{\lambda }\nsubseteq C$ and $%
a_{\alpha }s_{h}K_{\lambda }\subseteq C.$ The proof is similar to that of
Case 1.

\bigskip \textbf{Case 3:}  $a_{\alpha }r_{g}K_{\lambda }\subseteq C$ and $%
a_{\alpha }s_{h}K_{\lambda }\subseteq C.$ Since $a_{\alpha }y_{h}K_{\lambda
}\nsubseteq C$ and $a_{\alpha }s_{h}K_{\lambda }\subseteq C,$ we get $%
a_{\alpha }(s_{h}+y_{h})K_{\lambda }\nsubseteq C.$ By $(s_{h}+y_{h})\in J_{h%
\text{ }}$ it follows that $x_{g}(s_{h}+y_{h})K_{\lambda }\subseteq C.$
Since $x_{g}(s_{h}+y_{h})K_{\lambda }\subseteq C$, $a_{\alpha
}(s_{h}+y_{h})K_{\lambda }\nsubseteq C$ and $a_{\alpha }x_{g}K_{\lambda
}\nsubseteq C,$ by Lemma 2.5, we get $a_{\alpha }x_{g}(s_{h}+y_{h})\in
(C:_{R}M).$ Then we get $a_{\alpha }x_{g}s_{h}\in (C:_{R}M)$ since $a_{\alpha
}x_{g}(s_{h}+y_{h})\in (C:_{R}M)$ and $a_{\alpha }x_{g}y_{h}\in (C:_{R}M).$
As $a_{\alpha }x_{g}K_{\lambda }\nsubseteq C$ and $a_{\alpha
}r_{g}K_{\lambda }\subseteq C,$ we have $a_{\alpha }(r_{g}+x_{g})K_{\lambda
}\nsubseteq C.$ Then by Lemma 2.5, $a_{\alpha }(r_{g}+x_{g})y_{h}\in
(C:_{R}M)$ since $(r_{g}+x_{g})y_{h}K_{\lambda }\subseteq C,$ $a_{\alpha
}(r_{g}+x_{g})K_{\lambda }\nsubseteq C$ and $a_{\alpha }y_{h}K_{\lambda
}\nsubseteq C.$ Since $a_{\alpha }(r_{g}+x_{g})y_{h}\in (C:_{R}M)$ and $%
a_{\alpha }x_{g}y_{h}\in (C:_{R}M),$ we get $a_{\alpha }r_{g}y_{h}\in
(C:_{R}M).$ Thus by Lemma 2.5, we get $a_{\alpha
}(r_{g}+x_{g})(s_{h}+y_{h})\in (C:_{R}M)$ since $%
(r_{g}+x_{g})(s_{h}+y_{h})K_{\lambda }\subseteq C,$ $a_{\alpha
}(r_{g}+x_{g})K_{\lambda }\nsubseteq C$ and $a_{\alpha
}(s_{h}+y_{h})K_{\lambda }\nsubseteq C.$ As $a_{\alpha
}(r_{g}+x_{g})(s_{h}+y_{h})=a_{\alpha }r_{g}s_{h}+a_{\alpha
}r_{g}y_{h}+a_{\alpha }x_{g}s_{h}+a_{\alpha }x_{g}y_{h}\in (C:_{R}M)$ and $%
a_{\alpha }r_{g}y_{h},$ $a_{\alpha }x_{g}s_{h},$ $a_{\alpha }x_{g}y_{h}\in
(C:_{R}M),$ we have $a_{\alpha }r_{g}s_{h}\in (C:_{R}M),$ a contradiction.

\bigskip

$(ii)\Rightarrow (i)$ Assume that $(ii)$ holds. Let $r_{g}$, $s_{h}\in h(R)$
and $m_{\lambda }\in h(M)$ such that $r_{g}s_{h}m_{\lambda }\in C.$ Let $%
I=r_{g}R\ $and $J=s_{h}R\ $\ be a graded ideals of $R$ generated by $r_{g}$
and $s_{h}$, respectively and $K=m_{\lambda }R$ a graded submodule of $M$
generated by $m_{\lambda }.$ Then $I_{g}J_{h}K_{\lambda }\subseteq C.$ By
our assumption, there exists $a_{\alpha }\in A$ such that either $a_{\alpha
}I_{g}K_{\lambda }\subseteq C$ or $\ a_{\alpha }J_{h}K_{\lambda }\subseteq C$
or \ $a_{\alpha }I_{g}J_{h}\subseteq (C:_{R}M).$ This yields that either $%
a_{\alpha }r_{g}m_{\lambda }\in C$ or $a_{\alpha }s_{h}m_{\lambda }\in C$ or
$a_{\alpha }r_{g}s_{h}\in (C:_{R}M).$ Therefore, $C$ is a graded $A$%
-2-absorbing submodule of $M$.
\end{proof}
 %----------------------------------------------------
 %_----------------------------------------------------------------------------------------------------------
%-------------------------Corollary 2.7  -----------------------------------------------------------------------

\begin{corollary}
Let $R$ be a $G$-graded ring, $P$ a\ graded ideal\ of $R$ and $%
A\subseteq h(R)$ be a multiplicatively closed subset of $R$ with $P\bigcap
A=\emptyset .$ Let $I=$\ $\bigoplus\limits_{g\in G}I_{g}$, $J=$\ $%
\bigoplus\limits_{h\in G}J_{h}$ and $L=\bigoplus\limits_{\lambda \in
G}L_{\lambda }$ be a graded ideals of $R.$ Then the following statements are
equivalent:

 \begin{enumerate}[\upshape (i)]
   \item $P$ is a graded $A$-2-absorbing ideal of $M;$
   \item There exists $a_{\alpha }\in A$\ such that $I_{g}J_{h}L_{\lambda
}\subseteq P$ for some $g,h,\lambda \in G$ implies either $a_{\alpha
}I_{g}L_{\lambda }\subseteq P$ or $a_{\alpha }J_{h}L_{\lambda }\subseteq P$
or $a_{\alpha }I_{g}J_{h}\subseteq P$.

 \end{enumerate}
\end{corollary}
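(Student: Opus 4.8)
The plan is to deduce this corollary directly from Theorem 2.6 by specializing the module to $M=R$. The key observation is that a graded submodule of the graded $R$-module $R$ is precisely a graded ideal of $R$, so taking $C=P$ and $K=L$ we are exactly in the setting of Theorem 2.6, with the triple of graded ideals $I=\bigoplus I_g$, $J=\bigoplus J_h$ and the graded submodule $L=\bigoplus L_\lambda$ playing the role of $I$, $J$, $K$ there.

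First I would record the identification $(P:_{R}R)=\{r\in R : rR\subseteq P\}=P$, which holds because $1\in R_e\subseteq R$. Consequently the standing hypothesis of Theorem 2.6, namely $(C:_{R}M)\cap A=\emptyset$, becomes $P\cap A=\emptyset$, which is exactly the hypothesis of the corollary. Next I would note that, by the last sentence of Definition 2.1, "$P$ is a graded $A$-2-absorbing submodule of the graded $R$-module $R$" is by definition the same as "$P$ is a graded $A$-2-absorbing ideal of $R$"; this matches statement (i).

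With these identifications in hand, I would apply Theorem 2.6 to $M=R$, $C=P$. Statement (i) of Theorem 2.6 ($C$ is a graded $A$-2-absorbing submodule of $M$) is statement (i) of the corollary. Statement (ii) of Theorem 2.6 reads: there exists a fixed $a_\alpha\in A$ such that $I_g J_h L_\lambda\subseteq P$ for some $g,h,\lambda\in G$ implies $a_\alpha I_g L_\lambda\subseteq P$ or $a_\alpha J_h L_\lambda\subseteq P$ or $a_\alpha I_g J_h\subseteq (P:_{R}R)$; since $(P:_{R}R)=P$, this is verbatim statement (ii) of the corollary. The equivalence (i)$\iff$(ii) of Theorem 2.6 therefore gives the equivalence asserted here.

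There is essentially no obstacle beyond this bookkeeping; the only points that require a word of justification are that graded submodules of $R$ (as a module over itself) coincide with graded ideals, and that $(P:_{R}R)=P$, both of which are immediate. One could also remark that the fixed element $a_\alpha\in A$ furnished by Theorem 2.6 is the same element witnessing the graded $A$-2-absorbing property of the ideal $P$, so no extra uniformity argument is needed.
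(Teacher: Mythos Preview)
Your proposal is correct and matches the paper's approach: the corollary is stated immediately after Theorem~2.6 without a separate proof, so the intended argument is precisely the specialization $M=R$, $C=P$, $K=L$ together with the identification $(P:_{R}R)=P$ that you spell out.
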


  %_----------------------------------------------------------------------------------------------------------
%-------------------------Theorem 2.8  -----------------------------------------------------------------------
\begin{theorem}
 Let $R$ be a $G$-graded ring, $M$ a graded $R$-module, $C$ a
graded submodule of $M$ and $A\subseteq h(R)$ be a multiplicatively closed
subset of $R.$ If $C$is a graded $A$-2-absorbing submodule of $M,$ then $%
(C:_{R}M)$ is a graded $A$-2-absorbing ideal of $R$.
\end{theorem}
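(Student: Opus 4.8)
The plan is to verify the definition of a graded $A$-$2$-absorbing ideal directly for $(C:_{R}M)$, using as the distinguished element the very same $a_{\alpha}\in A$ that witnesses the graded $A$-$2$-absorbing property of $C$. The annihilator condition comes for free: since $C$ is graded $A$-$2$-absorbing we have $(C:_{R}M)\cap A=\emptyset$ by hypothesis, and because $1\in R_{e}$ one has $((C:_{R}M):_{R}R)=(C:_{R}M)$, so the requirement $((C:_{R}M):_{R}R)\cap A=\emptyset$ needed in the definition of a graded $A$-$2$-absorbing ideal holds automatically.

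For the main part I would take homogeneous $r_{g},s_{h},t_{\lambda}\in h(R)$ with $r_{g}s_{h}t_{\lambda}\in(C:_{R}M)$, i.e.\ $r_{g}s_{h}t_{\lambda}M\subseteq C$, and aim to show that $a_{\alpha}r_{g}s_{h}\in(C:_{R}M)$ or $a_{\alpha}r_{g}t_{\lambda}\in(C:_{R}M)$ or $a_{\alpha}s_{h}t_{\lambda}\in(C:_{R}M)$. The key device is to pass to the graded submodule $K:=t_{\lambda}M=\bigoplus_{\mu\in G}K_{\mu}$ of $M$; then $r_{g}s_{h}K=r_{g}s_{h}t_{\lambda}M\subseteq C$, hence $r_{g}s_{h}K_{\mu}\subseteq C$ for every $\mu\in G$. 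Applying Lemma~2.5 to $K$ componentwise, for each $\mu$ either $a_{\alpha}r_{g}s_{h}\in(C:_{R}M)$ (and then we are done) or $a_{\alpha}r_{g}K_{\mu}\subseteq C$ or $a_{\alpha}s_{h}K_{\mu}\subseteq C$; since the components of $t_{\lambda}M$ are the $t_{\lambda}M_{\mu}$, this translates into the statement that for every $\mu\in G$, $a_{\alpha}r_{g}t_{\lambda}M_{\mu}\subseteq C$ or $a_{\alpha}s_{h}t_{\lambda}M_{\mu}\subseteq C$. (Alternatively, one can run the same reasoning through Theorem~2.6 applied to the graded ideals $r_{g}R$, $s_{h}R$ and the graded submodule $t_{\lambda}M$.)

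What then remains, and this is the step I expect to be the real obstacle, is to upgrade this degree-by-degree dichotomy to a single global alternative, namely $a_{\alpha}r_{g}t_{\lambda}M\subseteq C$ (equivalently $a_{\alpha}r_{g}t_{\lambda}\in(C:_{R}M)$) or $a_{\alpha}s_{h}t_{\lambda}M\subseteq C$. If one alternative holds for all $\mu$ we are finished; otherwise there are $\mu_{1},\mu_{2}\in G$ with $a_{\alpha}r_{g}t_{\lambda}M_{\mu_{1}}\subseteq C$ but $a_{\alpha}s_{h}t_{\lambda}M_{\mu_{1}}\not\subseteq C$, and $a_{\alpha}s_{h}t_{\lambda}M_{\mu_{2}}\subseteq C$ but $a_{\alpha}r_{g}t_{\lambda}M_{\mu_{2}}\not\subseteq C$. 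Choosing homogeneous witnesses $m_{1}\in M_{\mu_{1}}$, $m_{2}\in M_{\mu_{2}}$, the relations $r_{g}s_{h}t_{\lambda}m_{i}\in C$ together with the graded $A$-$2$-absorbing property applied to $r_{g}\cdot s_{h}\cdot(t_{\lambda}m_{i})$ give $a_{\alpha}s_{h}t_{\lambda}m_{1}\in C$ and $a_{\alpha}r_{g}t_{\lambda}m_{2}\in C$, and I would then run the sum-of-witnesses argument exactly as in the proof of Lemma~2.5 on $r_{g}s_{h}t_{\lambda}(m_{1}+m_{2})\in C$ to reach a contradiction. The delicate point is that $m_{1}+m_{2}$ is homogeneous only when $\mu_{1}=\mu_{2}$, so this combining step must be handled with care, either by reducing to a common degree or by a further appeal to Lemma~2.5; it is the graded counterpart of the routine ``a module is not the union of two proper submodules'' step in the ungraded proof. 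Once the global alternative is in hand, translating ``$a_{\alpha}r_{g}t_{\lambda}M\subseteq C$'' back to ``$a_{\alpha}r_{g}t_{\lambda}\in(C:_{R}M)$'' (and similarly for the other two alternatives) is immediate, and this completes the verification that $(C:_{R}M)$ is a graded $A$-$2$-absorbing ideal of $R$.
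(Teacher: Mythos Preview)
Your overall plan coincides with the paper's: the paper, too, sets $I=r_{g}R$, $J=s_{h}R$, $K=t_{\lambda}M$ and appeals to Theorem~2.6 (rather than Lemma~2.5 componentwise) to conclude in one line that $a_{\alpha}r_{g}s_{h}\in(C:_{R}M)$ or $a_{\alpha}r_{g}t_{\lambda}\in(C:_{R}M)$ or $a_{\alpha}s_{h}t_{\lambda}\in(C:_{R}M)$. So the route you describe as the ``alternative'' is precisely the paper's route; the paper does not carry out the degree-by-degree analysis you set up, and in particular it never confronts the obstacle you flag.

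That said, the obstacle you isolate is a genuine gap in your write-up, not a mere nicety. Your ``sum-of-witnesses'' step needs $m_{1}+m_{2}$ to be homogeneous in order to feed it back into the graded $A$-$2$-absorbing condition, and when $\mu_{1}\neq\mu_{2}$ this fails; worse, the graded analogue of ``a module is not the union of two proper submodules'' is false in general (e.g.\ take $M=M_{0}\oplus M_{1}$ with $N_{1}=M_{0}$, $N_{2}=M_{1}$), so the hoped-for reduction does not go through by avoidance alone. As written, then, your argument does not close: you have shown only that for every $\mu\in G$ one has $a_{\alpha}r_{g}t_{\lambda}M_{\mu}\subseteq C$ or $a_{\alpha}s_{h}t_{\lambda}M_{\mu}\subseteq C$, and the passage from this to a single global alternative remains unjustified. (The paper's one-line appeal to Theorem~2.6 uses only the single component $K_{\lambda}=t_{\lambda}M_{e}$ and asserts the conclusion without addressing this point either, so you are in good company; but that does not make your argument complete.) To finish, you would need an argument that genuinely exploits the $2$-absorbing hypothesis across different degrees---for instance, by applying the defining property not only to the factorisation $r_{g}\cdot s_{h}\cdot(t_{\lambda}m_{\mu})$ but also to $r_{g}\cdot t_{\lambda}\cdot(s_{h}m_{\mu})$ and $s_{h}\cdot t_{\lambda}\cdot(r_{g}m_{\mu})$, where the first alternative is respectively $a_{\alpha}r_{g}t_{\lambda}\in(C:_{R}M)$ and $a_{\alpha}s_{h}t_{\lambda}\in(C:_{R}M)$, independent of $m_{\mu}$---rather than relying on a union argument that is unavailable in the graded setting.
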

\begin{proof}
Assume that $C$ is a graded $A$-2-absorbing submodule
of $M.$ Let $r_{g}s_{h}t_{\lambda }\in (C:_{R}M)$ for some $r_{g}, s_{h}, t_{\lambda }\in h(R).$ Let $I=r_{g}R\ $and $J=s_{h}R\ $\ be a graded
ideals of $R$ generated by $r_{g}$ and $s_{h}$, respectively and $%
K=t_{\lambda }M$ a graded submodule of $M$. Then $I_{g}J_{h}K_{\lambda }\subseteq C.$ By Theorem 2.6, we have a
fixed $a_{\alpha }\in A$ such that $a_{\alpha }I_{g}J_{h}\subseteq (C:_{R}M)$
or $\ a_{\alpha }I_{g}K_{\lambda }\subseteq C$ or $a_{\alpha
}J_{h}K_{\lambda }\subseteq C,$ it follows that either $a_{\alpha
}r_{g}s_{h}\in (C:_{R}M)$ or $a_{\alpha }r_{g}t_{\lambda }\in (C:_{R}M)$ or $%
a_{\alpha }s_{h}t_{\lambda }\in (C:_{R}M).$ Therefore, $(C:_{R}M)$ is a
graded $A$-2-absorbing ideal of $R$.
\end{proof}

 %_----------------------------------------------------------------------------------------------------------
%-------------------------Theorem 2. 9 -----------------------------------------------------------------------

\begin{theorem}
Let $R$ be a $G$-graded ring, $M$ a graded $R$-module,
$C$ a graded submodule of $M$ and $A\subseteq h(R)$ be a
multiplicatively closed subset of $R$ with $(C:_{R}M)\bigcap A=\emptyset .$
Then the following statements are equivalent:
 \begin{enumerate}[\upshape (i)]
   \item $C$ is a graded $A$-2-absorbing submodule of $M$;
   \item There is a fixed $a_{\alpha }\in A$ such that for every $r_{g},$ $%
s_{g}\in h(R),$ we have either $(C:_{M}a_{\alpha
}^{2}r_{g}s_{h})=(C:_{M}a_{\alpha }^{2}r_{g})$ or $(C:_{M}a_{\alpha
}^{2}r_{g}s_{h})=(C:_{M}a_{\alpha }^{2}s_{h})$ or $(C:_{M}a_{\alpha
}^{3}r_{g}s_{h})=M.$
 \end{enumerate}
\end{theorem}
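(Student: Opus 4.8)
My plan is to prove the two implications separately, with $(ii)\Rightarrow(i)$ being bookkeeping and $(i)\Rightarrow(ii)$ carrying the content. For $(ii)\Rightarrow(i)$: let $a_{\alpha}$ be the fixed element furnished by $(ii)$ and put $b=a_{\alpha}^{3}\in A$. I would show that $b$ witnesses that $C$ is a graded $A$-$2$-absorbing submodule. Take $r_{g},s_{h}\in h(R)$ and $m_{\lambda}\in h(M)$ with $r_{g}s_{h}m_{\lambda}\in C$; then $m_{\lambda}\in(C:_{M}r_{g}s_{h})\subseteq(C:_{M}a_{\alpha}^{2}r_{g}s_{h})$. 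If $(C:_{M}a_{\alpha}^{2}r_{g}s_{h})=(C:_{M}a_{\alpha}^{2}r_{g})$ then $a_{\alpha}^{2}r_{g}m_{\lambda}\in C$, so $br_{g}m_{\lambda}=a_{\alpha}(a_{\alpha}^{2}r_{g}m_{\lambda})\in C$; if $(C:_{M}a_{\alpha}^{2}r_{g}s_{h})=(C:_{M}a_{\alpha}^{2}s_{h})$ then symmetrically $bs_{h}m_{\lambda}\in C$; and if $(C:_{M}a_{\alpha}^{3}r_{g}s_{h})=M$ then $a_{\alpha}^{3}r_{g}s_{h}\in(C:_{R}M)$, i.e. $br_{g}s_{h}\in(C:_{R}M)$. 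Since $(C:_{R}M)\cap A=\emptyset$ by hypothesis, this is exactly the definition.

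For $(i)\Rightarrow(ii)$: let $a_{\alpha}$ be the fixed element from the definition of a graded $A$-$2$-absorbing submodule (this is also the element appearing in Lemma 2.5). Fix $r_{g},s_{h}\in h(R)$, assume $a_{\alpha}^{3}r_{g}s_{h}\notin(C:_{R}M)$ (otherwise the third alternative of $(ii)$ already holds), and put $K=(C:_{M}a_{\alpha}^{2}r_{g}s_{h})$, a graded submodule of $M$. Since $(C:_{M}a_{\alpha}^{2}r_{g})\subseteq K$ and $(C:_{M}a_{\alpha}^{2}s_{h})\subseteq K$ always, it is enough to prove $K\subseteq(C:_{M}a_{\alpha}^{2}r_{g})$ or $K\subseteq(C:_{M}a_{\alpha}^{2}s_{h})$. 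Because $(a_{\alpha}r_{g})(a_{\alpha}s_{h})K_{\lambda}\subseteq C$ for every $\lambda\in G$, Lemma 2.5 applied to the homogeneous elements $a_{\alpha}r_{g},a_{\alpha}s_{h}$ and the graded submodule $K$ gives, for each $\lambda$, that $a_{\alpha}^{2}r_{g}K_{\lambda}\subseteq C$ or $a_{\alpha}^{2}s_{h}K_{\lambda}\subseteq C$ or $a_{\alpha}^{3}r_{g}s_{h}\in(C:_{R}M)$; the last is excluded, so $K_{\lambda}=(C:_{M}a_{\alpha}^{2}r_{g})_{\lambda}$ or $K_{\lambda}=(C:_{M}a_{\alpha}^{2}s_{h})_{\lambda}$ for every $\lambda$. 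To globalize this componentwise alternative I would argue by contradiction: if neither inclusion holds, choose homogeneous $m\in K\setminus(C:_{M}a_{\alpha}^{2}r_{g})$ and homogeneous $n\in K\setminus(C:_{M}a_{\alpha}^{2}s_{h})$; applying the definition to $(a_{\alpha}r_{g})(a_{\alpha}s_{h})m\in C$ (and using $a_{\alpha}^{3}r_{g}s_{h}\notin(C:_{R}M)$ together with $a_{\alpha}^{2}r_{g}m\notin C$) forces $a_{\alpha}^{2}s_{h}m\in C$, and symmetrically $a_{\alpha}^{2}r_{g}n\in C$. Then I would combine $m$ and $n$: directly, via $(a_{\alpha}r_{g})(a_{\alpha}s_{h})(m+n)\in C$, when they lie in the same degree, and otherwise after multiplying $m$ by a homogeneous scalar $t$ so that $tm+n$ is homogeneous and applying the definition to $(a_{\alpha}r_{g})(a_{\alpha}s_{h})(tm+n)\in C$; in either case the alternative $a_{\alpha}^{2}r_{g}(\,\cdot\,)\in C$ collapses (using $a_{\alpha}^{2}r_{g}n\in C$) to $a_{\alpha}^{2}r_{g}m\in C$, and $a_{\alpha}^{2}s_{h}(\,\cdot\,)\in C$ collapses (using $a_{\alpha}^{2}s_{h}m\in C$) to $a_{\alpha}^{2}s_{h}n\in C$, contradicting our choices.

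The step I expect to be the main obstacle is precisely this final combination. In the ungraded characterization of $A$-$2$-absorbing submodules one simply uses $m+n$, but here $m+n$ is in general not homogeneous, so it cannot be fed into the graded $A$-$2$-absorbing condition. One therefore has to work one degree at a time, bringing the two chosen witnesses into a common homogeneous component by multiplication by homogeneous scalars (invoking, where convenient, that an abelian group is not the union of two proper subgroups to handle the componentwise alternative), and then checking that the resulting membership relations still collide. Getting this reduction to close uniformly in $\lambda$ — in particular dealing with the configurations in which the witnesses are forced into distinct degrees — is the delicate part of the argument.
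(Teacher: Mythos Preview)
Your proof of $(ii)\Rightarrow(i)$ coincides with the paper's (both take $a_{\alpha}^{3}\in A$ as the witness). For $(i)\Rightarrow(ii)$ you also start exactly as the paper does: with the fixed $a_{\alpha}$ from the definition one feeds $(a_{\alpha}r_{g})(a_{\alpha}s_{h})m_{\lambda}\in C$ into the $A$-$2$-absorbing condition for each homogeneous $m_{\lambda}\in K=(C:_{M}a_{\alpha}^{2}r_{g}s_{h})$ and concludes, when $a_{\alpha}^{3}r_{g}s_{h}\notin(C:_{R}M)$, that every homogeneous element of $K$ lies in $N_{1}=(C:_{M}a_{\alpha}^{2}r_{g})$ or in $N_{2}=(C:_{M}a_{\alpha}^{2}s_{h})$.

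It is only at the globalization step that the two arguments diverge. The paper does no element-chasing across degrees: from the inclusion of homogeneous elements it writes $(C:_{M}a_{\alpha}^{2}r_{g}s_{h})=(C:_{M}a_{\alpha}^{2}r_{g})\cup(C:_{M}a_{\alpha}^{2}s_{h})$ and then quotes \cite[Lemma~2.2]{15} --- the graded avoidance fact that a graded submodule which is the union of two graded submodules must equal one of them --- to conclude $K=N_{1}$ or $K=N_{2}$. That is precisely the ``abelian group is not the union of two proper subgroups'' principle you mention, but applied once to the whole graded submodule rather than degree by degree.

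Your proposed direct substitute --- shifting $m$ by a homogeneous scalar $t$ so that $tm+n$ becomes homogeneous --- does not close the argument: there is no reason $R_{\mu\lambda^{-1}}$ contains a nonzero $t$, and even when it does, $tm$ may well lie in $N_{1}$ (a submodule) although $m\notin N_{1}$, so the intended contradiction evaporates. Replacing that paragraph by the appeal to \cite[Lemma~2.2]{15} turns your proof into the paper's.
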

\begin{proof}
$(i)\Rightarrow (ii)$Assume that $C$ is a graded $A$-2-absorbing
submodule of $M$. Then there exists a fixed $a_{\alpha }\in A$ such that
whenever $r_{g}s_{h}m_{\lambda }\in C,$ where $r_{g},$ $s_{g}\in h(R)$ and
$m_{\lambda }\in h(M),$ then either $a_{\alpha }r_{g}s_{h}\in (C:_{R}M)$ or $%
a_{\alpha }r_{g}m_{\lambda }\in C$ or $a_{\alpha }s_{h}m_{\lambda }\in C.$
Now let $m_{\lambda }\in (C:_{M}a_{\alpha }^{2}r_{g}s_{h})\cap h(M).$ Hence $%
(a_{\alpha }r_{g})(a_{\alpha }s_{h})m_{\lambda }\in C.$ Then either $%
a_{\alpha }^{2}r_{g}m_{\lambda }\in C$ or $a_{\alpha }^{2}s_{h}m_{\lambda
}\in C$ or $a_{\alpha }^{3}r_{g}s_{h}\in (C:_{R}M)$ as $C$ is a graded $A$%
-2-absorbing submodule of $M.$ If $a_{\alpha }^{2}r_{g}m_{\lambda }\in C$ or $a_{\alpha
}^{2}s_{h}m_{\lambda }\in C,$ then $(C:_{M}a_{\alpha
}^{2}r_{g}s_{h})\subseteq (C:_{M}a_{\alpha }^{2}r_{g})\cup (C:_{M}a_{\alpha
}^{2}s_{h}).$ Clearly $(C:_{M}a_{\alpha }^{2}r_{g})\cup (C:_{M}a_{\alpha
}^{2}s_{h})\subseteq (C:_{M}a_{\alpha }^{2}r_{g}s_{h}).$ So $%
(C:_{M}a_{\alpha }^{2}r_{g})\cup (C:_{M}a_{\alpha
}^{2}s_{h})=(C:_{M}a_{\alpha }^{2}r_{g}s_{h}).$ By \cite[Lemma 2.2]{15}, $(C:_{M}a_{\alpha
}^{2}r_{g})=(C:_{M}a_{\alpha }^{2}r_{g}s_{h})$ or $\ (C:_{M}a_{\alpha
}^{2}s_{h})=(C:_{M}a_{\alpha }^{2}r_{g}s_{h}).$ If $a_{\alpha }^{3}r_{g}s_{h}\in (C:_{R}M)$, then $(C:_{M}a_{\alpha
}^{3}r_{g}s_{h})=M.$

\bigskip

$(ii)\Rightarrow (i)$ Let $\ r_{g}s_{h}m_{\lambda }\in C,$ where $r_{g},$ $%
s_{g}\in h(R)$ and $m_{\lambda }\in h(M).$ Thus $m_{\lambda }\in
(C:_{M}a_{\alpha }^{2}r_{g}s_{h}).$ By given hypothesis, we have $%
(C:_{M}a_{\alpha }^{2}r_{g}s_{h})=(C:_{M}a_{\alpha }^{2}r_{g})$ or $%
(C:_{M}a_{\alpha }^{2}r_{g}s_{h})=(C:_{M}a_{\alpha }^{2}s_{h})$ or $%
(C:_{M}a_{\alpha }^{3}r_{g}s_{h})=M.$ Then $a_{\alpha }^{2}r_{g}m_{\lambda
}\in C$ or $a_{\alpha }^{2}s_{h}m_{\lambda }\in C$ or $a_{\alpha
}^{3}r_{g}s_{h}\in (C:_{R}M),$ this yields that either $a_{\alpha
}^{3}r_{g}m_{\lambda }\in C$ or $a_{\alpha }^{3}s_{h}m_{\lambda }\in C$ or $%
a_{\alpha }^{3}r_{g}s_{h}\in (C:_{R}M).$ By setting $s^{\ast }=a_{\alpha
}^{3},$ $C$ is a graded $A$-2-absorbing submodule of $M$.
\end{proof}
  %_----------------------------------------------------------------------------------------------------------
%-------------------------Lemma 2.10  -----------------------------------------------------------------------

\begin{lemma}
Let $R$ be a $G$-graded ring, $M$ a graded $R$-module, $A\subseteq h(R)$ be
a multiplicatively closed subset of $R\ $and $C$ a graded $A$-2-absorbing
submodule of $M$. Then the following statements hold:
\begin{enumerate}[\upshape (i)]
   \item There exists a fixed $a_{\alpha }\in A$ such that $(C:_{M}a_{\alpha
}^{3})=(C:_{M}a_{\alpha }^{n})$ for all $n\geqslant 3.$
   \item There exists a fixed $a_{\alpha }\in A$ such that $(C:_{R}a_{\alpha
}^{3}M)=(C:_{R}a_{\alpha }^{n}M)$ for all $n\geqslant 3.$
 \end{enumerate}

\end{lemma}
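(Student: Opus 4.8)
The plan is to deduce (i) from the characterization already established in Theorem 2.9 by one well-chosen substitution followed by a short bootstrapping argument, and then to read off (ii) by applying (i) to the graded ideal $(C:_{R}M)$, which is graded $A$-2-absorbing by Theorem 2.8.

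For (i): since $(C:_{R}M)\cap A=\emptyset$, Theorem 2.9 supplies a fixed $a_{\alpha}\in A$ such that, for all $r_{g},s_{h}\in h(R)$, one of $(C:_{M}a_{\alpha}^{2}r_{g}s_{h})=(C:_{M}a_{\alpha}^{2}r_{g})$, $(C:_{M}a_{\alpha}^{2}r_{g}s_{h})=(C:_{M}a_{\alpha}^{2}s_{h})$, or $(C:_{M}a_{\alpha}^{3}r_{g}s_{h})=M$ holds. I would specialize this to $r_{g}=s_{h}=a_{\alpha}$. The third alternative then reads $(C:_{M}a_{\alpha}^{5})=M$, i.e.\ $a_{\alpha}^{5}M\subseteq C$, which would put $a_{\alpha}^{5}\in(C:_{R}M)\cap A=\emptyset$ --- impossible; so one of the first two alternatives holds, and each of them says exactly $(C:_{M}a_{\alpha}^{4})=(C:_{M}a_{\alpha}^{3})$. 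Then I would bootstrap: for every $k\geq 3$ the inclusion $(C:_{M}a_{\alpha}^{k})\subseteq(C:_{M}a_{\alpha}^{k+1})$ is automatic since $C$ is a submodule, while conversely, if $a_{\alpha}^{k+1}m\in C$ then $a_{\alpha}^{4}(a_{\alpha}^{k-3}m)\in C$, so $a_{\alpha}^{k-3}m\in(C:_{M}a_{\alpha}^{4})=(C:_{M}a_{\alpha}^{3})$, whence $a_{\alpha}^{k}m=a_{\alpha}^{3}(a_{\alpha}^{k-3}m)\in C$; thus $(C:_{M}a_{\alpha}^{k})=(C:_{M}a_{\alpha}^{k+1})$ for all $k\geq 3$, and chaining these equalities from $k=3$ upward yields $(C:_{M}a_{\alpha}^{3})=(C:_{M}a_{\alpha}^{n})$ for every $n\geq 3$.

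For (ii): by Theorem 2.8 the graded ideal $D:=(C:_{R}M)$ is a graded $A$-2-absorbing ideal of $R$, that is, a graded $A$-2-absorbing submodule of the graded $R$-module $R$ (note $D\cap A=\emptyset$). Applying part (i) to $D$ inside $R$ gives a fixed $a_{\alpha}\in A$ with $(D:_{R}a_{\alpha}^{3})=(D:_{R}a_{\alpha}^{n})$ for all $n\geq 3$. Finally, for each $k$ one has $(C:_{R}a_{\alpha}^{k}M)=\{r\in R:ra_{\alpha}^{k}M\subseteq C\}=\{r\in R:ra_{\alpha}^{k}\in(C:_{R}M)\}=(D:_{R}a_{\alpha}^{k})$, so these are precisely the equalities $(C:_{R}a_{\alpha}^{3}M)=(C:_{R}a_{\alpha}^{n}M)$ claimed in (ii).

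The one step that is not pure bookkeeping is the base equality $(C:_{M}a_{\alpha}^{4})=(C:_{M}a_{\alpha}^{3})$. A direct attack from the definition does not work: in any factorization $a_{\alpha}^{4}m=r_{g}s_{h}m'$ with $r_{g},s_{h}$ powers of $a_{\alpha}$, the ``$a_{\alpha}r_{g}s_{h}\in(C:_{R}M)$'' branch of the trichotomy is impossible (it would place a power of $a_{\alpha}$ in $(C:_{R}M)\cap A$) and the two ``module'' branches only reproduce $a_{\alpha}^{4}m\in C$. The substantive input is therefore Theorem 2.9 (whose proof is where the ``a submodule contained in a union of two submodules lies in one of them'' lemma actually bites); granted that one equality, the rest of (i) and all of (ii) are mechanical.
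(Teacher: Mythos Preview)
Your argument is correct, but it takes a different and somewhat longer route than the paper's, and your closing commentary contains a mistaken claim that is worth correcting.

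For (i), the paper works directly from the definition rather than invoking Theorem~2.9. Given the fixed $a_{\alpha}$ witnessing the $A$-2-absorbing property, for $m_{\lambda}\in(C:_{M}a_{\alpha}^{4})\cap h(M)$ one applies the trichotomy to $r_{g}=a_{\alpha}^{2}$, $s_{h}=a_{\alpha}^{2}$, and module element $m_{\lambda}$: the ideal branch would give $a_{\alpha}^{5}\in(C:_{R}M)\cap A$, which is impossible, and \emph{both} module branches give $a_{\alpha}\cdot a_{\alpha}^{2}\cdot m_{\lambda}=a_{\alpha}^{3}m_{\lambda}\in C$. The inductive step factors $a_{\alpha}^{n}=a_{\alpha}^{2}\cdot a_{\alpha}^{n-2}$ (acting on $m_{\lambda}$) and argues the same way. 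So your assertion that ``in any factorization $a_{\alpha}^{4}m=r_{g}s_{h}m'$ with $r_{g},s_{h}$ powers of $a_{\alpha}$ \dots\ the two `module' branches only reproduce $a_{\alpha}^{4}m\in C$'' is false: you overlooked the symmetric splitting $a_{\alpha}^{2}\cdot a_{\alpha}^{2}$ with $m'=m$, which is exactly what the paper uses. Your detour through Theorem~2.9 is valid, but the direct attack you dismissed is precisely the paper's proof.

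For (ii), the paper notes that it follows directly from (i), via the identity $(C:_{R}a_{\alpha}^{k}M)=((C:_{M}a_{\alpha}^{k}):_{R}M)$; this uses the \emph{same} $a_{\alpha}$ as in (i). Your route through Theorem~2.8 (applying part (i) to the ideal $(C:_{R}M)$) is also correct for the lemma as stated, but it yields an a~priori different element of $A$. The paper's approach is shorter and has the practical advantage that a single $a_{\alpha}$ works for both parts, which is how the lemma is invoked in the proof of Theorem~2.11.
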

\begin{proof}

$(i)$ Since $C$ a graded $A$-2-absorbing submodule of $M,$ there exists a
fixed $a_{\alpha }\in A$ such that whenever $r_{g}s_{h}m_{\lambda }\in C,$
where $r_{g},s_{h}\in h(R)$ and $m_{\lambda }\in h(M),$ then either $%
a_{\alpha }r_{g}s_{h}\in (C:_{R}M)$ or $a_{\alpha }r_{g}m_{\lambda }\in C$
or $a_{\alpha }s_{h}m_{\lambda }\in C.$ Let $m_{\lambda }\in
(C:_{M}a_{\alpha }^{4})\cap h(M)$, it follows that $a_{\alpha
}^{4}m_{\lambda }=a_{\alpha }^{2}(a_{\alpha }^{2}m_{\lambda })\in C.$ Then $%
a_{\alpha }^{3}m_{\lambda }\in C$ as $C$ is a graded $A$-2-absorbing
submodule of $M,$ it follows that $m_{\lambda }\in (C:_{M}a_{\alpha }^{3}).$
Hence $(C:_{M}a_{\alpha }^{4})\subseteq (C:_{M}a_{\alpha }^{3}).$ Since the
other inclusion is always satisfied, we get $(C:_{M}a_{\alpha
}^{4})=(C:_{M}a_{\alpha }^{3}).$ Assume that $(C:_{M}a_{\alpha
}^{3})=(C:_{M}a_{\alpha }^{k})$ for all $k<n.$ We will show that $%
(C:_{M}a_{\alpha }^{3})=(C:_{M}a_{\alpha }^{n}).$ Let $m_{\lambda }^{\prime
}\in (C:_{M}a_{\alpha }^{n})\cap h(M)$, it follows that $a_{\alpha
}^{n}m_{\lambda }^{\prime }=a_{\alpha }^{2}(a_{\alpha }^{n-2}m_{\lambda
}^{\prime })\in C.$ Then either $a_{\alpha }^{3}m_{\lambda }^{\prime }\in C$
or $a_{\alpha }^{n-1}m_{\lambda }^{\prime }\in C$ as $C$ is a graded $A$%
-2-absorbing submodule of $M,$ it follows that $m_{\lambda }^{\prime }\in
(C:_{M}a_{\alpha }^{3})\cup (C:_{M}a_{\alpha }^{n-1})=(C:_{M}a_{\alpha
}^{3}) $ by induction hypothesis. Therefore $(C:_{M}a_{\alpha
}^{3})=(C:_{M}a_{\alpha }^{n})$ for every $n\geqslant 3.$
\bigskip

$(ii)$ Follows directly from $(i)$.
\end{proof}

  %_----------------------------------------------------------------------------------------------------------
%-------------------------Theorem 2.11  -----------------------------------------------------------------------

\begin{theorem}
Let $R$ be a $G$-graded ring, $M$ a graded $R$-module, $A\subseteq h(R)$ be
a multiplicatively closed subset of $R$ and $C$ a graded submodule of $%
M$ with $(C:_{R}M)\cap A=\emptyset $. Then the following statements are
equivalent:
 \begin{enumerate}[\upshape (i)]
   \item $C$ is a graded $A$-2-absorbing submodule.
   \item $(C:_{M}a_{\alpha })$ is a graded 2-absorbing submodule of $M$ for some $%
a_{\alpha }\in A.$
 \end{enumerate}
\end{theorem}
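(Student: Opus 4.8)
The plan is to prove the two implications by matching up the fixed element of $A$ occurring in the definition of a graded $A$-$2$-absorbing submodule with the element $a_{\alpha}$ of statement $(ii)$: for $(ii)\Rightarrow(i)$ a single element of $A$ will play both roles, while for $(i)\Rightarrow(ii)$ I would pass from the fixed witness $a_{\alpha}$ of the $A$-$2$-absorbing condition to its cube $a_{\alpha}^{3}$ (which again lies in $A$, since $A$ is multiplicatively closed). Throughout I would use two easy facts: for any $a\in A$ the submodule $(C:_{M}a)=(C:_{M}aR)$ is graded because $aR$ is a graded ideal, and it is \emph{proper}, since $(C:_{M}a)=M$ would give $aM\subseteq C$, i.e.\ $a\in(C:_{R}M)\cap A=\emptyset$; and $C\subseteq(C:_{M}a)$ always holds because $C$ is a submodule.

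For $(ii)\Rightarrow(i)$, suppose $N:=(C:_{M}a_{\alpha})$ is a graded $2$-absorbing submodule for some $a_{\alpha}\in A$; I would take this same $a_{\alpha}$ as the fixed element witnessing the $A$-$2$-absorbing property (the hypothesis $(C:_{R}M)\cap A=\emptyset$ being available). Given $r_{g},s_{h}\in h(R)$ and $m_{\lambda}\in h(M)$ with $r_{g}s_{h}m_{\lambda}\in C\subseteq N$, the $2$-absorbing property of $N$ forces $r_{g}m_{\lambda}\in N$, or $s_{h}m_{\lambda}\in N$, or $r_{g}s_{h}\in(N:_{R}M)$; spelling out $N=(C:_{M}a_{\alpha})$, these three say exactly $a_{\alpha}r_{g}m_{\lambda}\in C$, or $a_{\alpha}s_{h}m_{\lambda}\in C$, or $a_{\alpha}r_{g}s_{h}\in(C:_{R}M)$, which is the required conclusion.

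For $(i)\Rightarrow(ii)$, let $a_{\alpha}\in A$ be a fixed witness of the $A$-$2$-absorbing property of $C$ and set $N:=(C:_{M}a_{\alpha}^{3})$, a proper graded submodule. Let $r_{g},s_{h}\in h(R)$ and $m_{\lambda}\in h(M)$ with $r_{g}s_{h}m_{\lambda}\in N$, i.e.\ $a_{\alpha}^{3}r_{g}s_{h}m_{\lambda}\in C$. I would rewrite this as $r_{g}\,s_{h}\,(a_{\alpha}^{3}m_{\lambda})\in C$, with $a_{\alpha}^{3}m_{\lambda}\in h(M)$, and apply the $A$-$2$-absorbing property with the fixed $a_{\alpha}$, obtaining one of: $a_{\alpha}r_{g}s_{h}\in(C:_{R}M)$; $a_{\alpha}^{4}r_{g}m_{\lambda}\in C$; $a_{\alpha}^{4}s_{h}m_{\lambda}\in C$. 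In the first case, multiplying through by $a_{\alpha}^{2}$ gives $a_{\alpha}^{3}r_{g}s_{h}M\subseteq C$, so $r_{g}s_{h}\in(N:_{R}M)$. In the second, $r_{g}m_{\lambda}\in(C:_{M}a_{\alpha}^{4})$, and Lemma 2.10(i) gives $(C:_{M}a_{\alpha}^{4})=(C:_{M}a_{\alpha}^{3})=N$, so $r_{g}m_{\lambda}\in N$; the third case is symmetric and yields $s_{h}m_{\lambda}\in N$. Hence $N$ is a graded $2$-absorbing submodule of $M$ with $a_{\alpha}^{3}\in A$, which is $(ii)$.

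I expect the delicate point to be precisely the second and third cases of $(i)\Rightarrow(ii)$: the $A$-$2$-absorbing hypothesis applied to $r_{g}s_{h}(a_{\alpha}^{3}m_{\lambda})$ yields $a_{\alpha}^{4}r_{g}m_{\lambda}\in C$, one power of $a_{\alpha}$ more than membership in $N=(C:_{M}a_{\alpha}^{3})$ requires, and it is exactly the stabilization $(C:_{M}a_{\alpha}^{n})=(C:_{M}a_{\alpha}^{3})$ for $n\geq3$ provided by Lemma 2.10 that absorbs this surplus power. (The same phenomenon explains why the cube $a_{\alpha}^{3}$, rather than $a_{\alpha}$ itself, is the right element to produce in $(ii)$; any power $\geq 3$ would work equally well.)
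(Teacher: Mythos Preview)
Your proof is correct and follows essentially the same route as the paper: the $(ii)\Rightarrow(i)$ direction is identical, and for $(i)\Rightarrow(ii)$ both arguments pass to a power of the fixed witness $a_{\alpha}$ and invoke the stabilization of Lemma~2.10 to absorb the extra factor of $a_{\alpha}$ produced by the $A$-$2$-absorbing condition. The only difference is cosmetic: the paper distributes $a_{\alpha}^{2}$ across all three factors and works with $(C:_{M}a_{\alpha}^{6})$, whereas you place $a_{\alpha}^{3}$ entirely on the module element and work with $(C:_{M}a_{\alpha}^{3})$, which is slightly more economical.
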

\begin{proof}
 $(i)\Rightarrow (ii)$ Assume that $C$ is a graded $A$-2-absorbing submodule. Then there exists a
fixed $a_{\alpha }\in A$ such that whenever $r_{g}s_{h}m_{\lambda }\in C,$
where $r_{g},s_{h}\in h(R)\ $and $m_{\lambda }\in h(M),$ then either $%
a_{\alpha }r_{g}s_{h}\in (C:_{R}M)$ or $a_{\alpha }r_{g}m_{\lambda }\in C$
or $\ a_{\alpha }s_{h}m_{\lambda }\in C.$ By Lemma 2.10, we have $%
(C:_{M}a_{\alpha }^{3})=(C:_{M}a_{\alpha }^{n})$ \ and $(C:_{R}a_{\alpha
}^{3}M)=(C:_{R}a_{\alpha }^{n}M)$ for all $n\geqslant 3.$ We show that $%
(C:_{M}a_{\alpha }^{6})=(C:_{M}a_{\alpha }^{3})$ is a graded 2-absorbing
submodule of $M$. Let $r_{g}s_{h}m_{\lambda }\in (C:_{M}a_{\alpha }^{6})\
\ $for some $r_{g},s_{h}\in h(R)\ $and $m_{\lambda }\in h(M).$ It follows
that, $a_{\alpha }^{6}(r_{g}s_{h}m_{\lambda })=(a_{\alpha
}^{2}r_{g})(a_{\alpha }^{2}s_{h})(a_{\alpha }^{2}m_{\lambda })\in C.$ Then
either $a_{\alpha }(a_{\alpha }^{2}r_{g})(a_{\alpha }^{2}s_{h})=a_{\alpha
}^{5}r_{g}s_{h}\in (C:_{R}M)$ or $a_{\alpha }(a_{\alpha
}^{2}r_{g})(a_{\alpha }^{2}m_{\lambda })=a_{\alpha }^{5}r_{g}m_{\lambda }\in
C$ or $a_{\alpha }(a_{\alpha }^{2}s_{h})(a_{\alpha }^{2}m_{\lambda
})=a_{\alpha }^{5}s_{h}m_{\lambda }\in C$ as $C$ is a graded $A$-2-absorbing
submodule of $M$. It follows that either $r_{g}s_{h}\in (C:_{R}a_{\alpha
}^{5}M)=(C:_{R}a_{\alpha }^{6}M)=((C:_{M}a_{\alpha }^{6}M):_{R}M)$ or $%
r_{g}m_{\lambda }\in (C:_{M}a_{\alpha }^{5}M)=(C:_{M}a_{\alpha }^{6}M)$ or $%
s_{h}m_{\lambda }\in (C:_{M}a_{\alpha }^{5}M)=(C:_{M}a_{\alpha }^{6}M).$
Thus $(C:_{M}a_{\alpha }^{6})$ is a graded 2-absorbing submodule of $M.$

\bigskip

$(ii)\Rightarrow (i)$ Assume that $(C:_{M}a_{\alpha })$ is a graded
2-absorbing submodule of $M$ for some $a_{\alpha }\in A.$ Let $\
r_{g}s_{h}m_{\lambda }\in C\subseteq (C:_{M}a_{\alpha }),$ where $r_{g},$ $%
s_{g}\in h(R)$ and $m_{\lambda }\in h(M).$ Then $r_{g}s_{h}m_{\lambda }\in
(C:_{M}a_{\alpha })$. Since $(C:_{M}a_{\alpha })$ is a graded 2-absorbing
submodule of $M,$ we get either $r_{g}s_{h}\in ((C:_{M}a_{\alpha }):_{R}M)$
or $r_{g}m_{\lambda }\in (C:_{M}a_{\alpha })$ or $s_{h}m_{\lambda }\in
(C:_{M}a_{\alpha })$. Thus $a_{\alpha }r_{g}s_{h}\in (C:_{R}M)$ or $%
a_{\alpha }r_{g}m_{\lambda }\in C$ or $a_{\alpha }s_{h}m_{\lambda }\in C.$
Therefore, $C$ is a graded $A$-2-absorbing submodule.
\end{proof}

 %----------------------------------------------------
 %_----------------------------------------------------------------------------------------------------------
%-------------------------Theorem 2.12  -----------------------------------------------------------------------
Let $M$ and $M^{\prime }$ be two graded $R$-modules. A
homomorphism of graded $R$-modules $f:M\rightarrow M^{\prime }$ is a
homomorphism of $R$-modules verifying $f(M_{g})\subseteq M_{g}^{\prime }$
for every $g\in G$, (see \cite{23}).

The following result studies the behavior of graded $A$-$2$-absorbing
submodules under graded homomorphism.
\begin{theorem}
Let $R$ be a $G$-graded ring and $M,$ $%
M^{\prime }$ be two graded $R$-modules and $f:M\rightarrow M^{\prime }$ be a
graded homorphism. Let $A\subseteq h(R)$ be a multiplicatively closed subset
of $R.$
 \begin{enumerate}[\upshape (i)]

   \item If $C^{\prime }$ is a graded $A$-2-absorbing submodule of $%
M^{\prime }\ $and $(f^{-1}(C^{\prime }):_{R}M)\cap A=$ $\emptyset ,$ then $%
f^{-1}(C^{\prime })$ is a graded $A$-2-Absorbing submodule of $M.$
   \item If $f$ is a graded epimorphism and $C$ is a graded $A$%
-2-absorbing submodule of $M$ with $Kerf\subseteq C,$ then $f(C)$ is a
graded $A$-2-absorbing submodule of $M^{\prime }$.
 \end{enumerate}

\end{theorem}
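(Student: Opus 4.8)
The plan is to verify the two parts directly from the definition of a graded $A$-$2$-absorbing submodule, exploiting the fixed element $a_{\alpha}\in A$ that comes with $C$ (resp.\ $C'$) and the compatibility of $f$ with the grading.

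For part (i), suppose $C'$ is a graded $A$-$2$-absorbing submodule of $M'$ with associated fixed element $a_{\alpha}\in A$, and assume $(f^{-1}(C'):_{R}M)\cap A=\emptyset$ so that $f^{-1}(C')$ is an admissible candidate. Recall first that $f^{-1}(C')=\bigoplus_{g\in G}(f^{-1}(C')\cap M_{g})$ is a graded submodule of $M$, since $f$ respects the grading. Now take $r_{g},s_{h}\in h(R)$ and $m_{\lambda}\in h(M)$ with $r_{g}s_{h}m_{\lambda}\in f^{-1}(C')$. Applying $f$ and using that $f$ is an $R$-module homomorphism gives $r_{g}s_{h}f(m_{\lambda})\in C'$, and $f(m_{\lambda})\in h(M')$ because $f(M_{\lambda})\subseteq M'_{\lambda}$. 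Since $C'$ is graded $A$-$2$-absorbing with fixed element $a_{\alpha}$, one of $a_{\alpha}r_{g}s_{h}\in (C':_{R}M')$, $a_{\alpha}r_{g}f(m_{\lambda})\in C'$, or $a_{\alpha}s_{h}f(m_{\lambda})\in C'$ holds. In the first case $a_{\alpha}r_{g}s_{h}M\subseteq f^{-1}(f(a_{\alpha}r_{g}s_{h}M))\subseteq f^{-1}(a_{\alpha}r_{g}s_{h}M')\subseteq f^{-1}(C')$, so $a_{\alpha}r_{g}s_{h}\in (f^{-1}(C'):_{R}M)$; in the second, $f(a_{\alpha}r_{g}m_{\lambda})=a_{\alpha}r_{g}f(m_{\lambda})\in C'$ gives $a_{\alpha}r_{g}m_{\lambda}\in f^{-1}(C')$, and similarly in the third. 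Hence $f^{-1}(C')$ is graded $A$-$2$-absorbing with the \emph{same} fixed element $a_{\alpha}$.

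For part (ii), suppose $f$ is a graded epimorphism, $C$ is a graded $A$-$2$-absorbing submodule of $M$ with fixed element $a_{\alpha}\in A$, and $\operatorname{Ker}f\subseteq C$. First one must check $(f(C):_{R}M')\cap A=\emptyset$: if $a_{\beta}\in A$ lay in $(f(C):_{R}M')$, then $a_{\beta}M'=a_{\beta}f(M)=f(a_{\beta}M)\subseteq f(C)$, and since $\operatorname{Ker}f\subseteq C$ this pulls back to $a_{\beta}M\subseteq f^{-1}(f(C))=C$, contradicting $(C:_{R}M)\cap A=\emptyset$. Also $f(C)=\bigoplus_{g}f(C_{g})$ is a graded submodule of $M'$ as $f$ is a graded epimorphism. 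Now take $r_{g},s_{h}\in h(R)$ and $m'_{\lambda}\in h(M')$ with $r_{g}s_{h}m'_{\lambda}\in f(C)$. Surjectivity onto homogeneous components (from $f(M_{\lambda})=M'_{\lambda}$) gives $m_{\lambda}\in M_{\lambda}$ with $f(m_{\lambda})=m'_{\lambda}$; then $f(r_{g}s_{h}m_{\lambda})\in f(C)$, and here the hypothesis $\operatorname{Ker}f\subseteq C$ is essential: $f(r_{g}s_{h}m_{\lambda})\in f(C)$ forces $r_{g}s_{h}m_{\lambda}\in C+\operatorname{Ker}f=C$. Applying the graded $A$-$2$-absorbing property of $C$ with fixed element $a_{\alpha}$ yields $a_{\alpha}r_{g}s_{h}\in (C:_{R}M)$ or $a_{\alpha}r_{g}m_{\lambda}\in C$ or $a_{\alpha}s_{h}m_{\lambda}\in C$; applying $f$ and using surjectivity once more ($(C:_{R}M)\subseteq (f(C):_{R}M')$ because $a_{\alpha}r_{g}s_{h}M\subseteq C\Rightarrow a_{\alpha}r_{g}s_{h}M'=f(a_{\alpha}r_{g}s_{h}M)\subseteq f(C)$) gives $a_{\alpha}r_{g}s_{h}\in (f(C):_{R}M')$ or $a_{\alpha}r_{g}m'_{\lambda}\in f(C)$ or $a_{\alpha}s_{h}m'_{\lambda}\in f(C)$. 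Thus $f(C)$ is graded $A$-$2$-absorbing with the same fixed element.

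The routine points are the verifications that $f^{-1}(C')$ and $f(C)$ are graded submodules and the elementary set-theoretic manipulations with images and preimages. The step that requires genuine care — and the place where the stated hypotheses are used in an essential way — is establishing that $r_{g}s_{h}m_{\lambda}$ (resp.\ a homogeneous preimage of $m'_{\lambda}$) actually lands in $C$: in (i) this is immediate from the definition of preimage, but in (ii) one needs $\operatorname{Ker}f\subseteq C$ to conclude $r_{g}s_{h}m_{\lambda}\in C$ from $f(r_{g}s_{h}m_{\lambda})\in f(C)$, and one separately needs the kernel condition and surjectivity to guarantee the admissibility condition $(f(C):_{R}M')\cap A=\emptyset$. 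Keeping track of the fact that the \emph{same} fixed $a_{\alpha}$ works throughout (rather than a new one at each invocation) is the other subtlety worth stating explicitly.
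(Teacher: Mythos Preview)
Your proof is correct and follows essentially the same approach as the paper's: in both parts you verify the defining condition directly by pushing or pulling the product $r_g s_h m_\lambda$ through $f$, using $(C':_R M')\subseteq (f^{-1}(C'):_R M)$ in (i) and $\operatorname{Ker}f\subseteq C$ together with surjectivity onto homogeneous components in (ii). Your presentation is slightly more careful than the paper's in two respects---you explicitly note that $f^{-1}(C')$ and $f(C)$ are graded, and you emphasize that the \emph{same} fixed $a_\alpha$ works uniformly---but the argument is otherwise identical.
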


\begin{proof}
$(i)$ Assume that $C^{\prime }$ is a graded $A$-2-absorbing submodule of $%
M^{\prime }$. Now, let $r_{g},s_{h}\in h(R)$ and $m_{\lambda }\in h(M)$ such
that $r_{g}s_{h}m_{\lambda }\in f^{-1}(C^{\prime }).$ Hence $%
f(r_{g}s_{h}m_{\lambda })$ $=r_{g}s_{h}f(m_{\lambda })\in C^{\prime }.$ Since $C^{\prime }$ is a graded $A$-2-absorbing submodule, there exists $%
a_{\alpha }\in A$ such that either $a_{\alpha }r_{g}s_{h}\in (C^{\prime
}:_{R}M^{\prime })$ or $a_{\alpha }r_{g}f(m_{\lambda })=f(a_{\alpha
}r_{g}m_{\lambda })\in C^{\prime }$ or $a_{\alpha }s_{h}f(m_{\lambda
})=f(a_{\alpha }s_{h}m_{\lambda })\in C^{\prime }.$ It follows that either $%
a_{\alpha }r_{g}s_{h}\in (C^{\prime }:_{R}M^{\prime })\subseteq
(f^{-1}(C^{\prime }):_{R}M)$ or $a_{\alpha }r_{g}m_{\lambda }\in
f^{-1}(C^{\prime })$ or $a_{\alpha }s_{h}m_{\lambda }\in f^{-1}(C^{\prime
}). $ Therefore, $f^{-1}(C^{\prime })$ is a graded $A$-2-absorbing submodule
of $M$.

\bigskip

$(ii)$ Assume that $C$ is a graded $A$-2-absorbing submodule of $M$
containing $Kerf$. First, we want to show that $(f(C):_{R}M^{\prime
})\bigcap A=\emptyset .$ Suppose on the contrary that there exists $a_{g}\in
(f(C):_{R}M^{\prime })\bigcap A.$ Hence $a_{g}M^{\prime }\subseteq f(C),$
this implies that $f(a_{g}M)=a_{g}f(M)\subseteq a_{g}M^{\prime }\subseteq
f(C).$ It follows that, $a_{g}M\subseteq a_{g}M+Kerf\subseteq C+Kerf=C.$
Hence $a_{g}M\subseteq C$ and so, $a_{g}\in (C:_{R}M),$ which is a
contradiction since $(C:_{R}M)\bigcap A=\Phi .$ Now, let $%
r_{g}s_{h}m_{\lambda }^{\prime }\in f(C)$ for some $r_{g},$ $s_{h}\in h(R)$
and $m_{\lambda }^{\prime }\in h(M^{\prime }).$ Then, there exists $c_{\beta
}\in C\bigcap h(M)$ such that $r_{g}s_{h}m_{\lambda }^{\prime }=f(c_{\beta
}).$ Since $f$ is a graded epimorphism and $m_{\lambda }^{\prime }\in
h(M^{\prime })$, there exists $m_{\lambda }\in h(M)$ such that $m_{\lambda
}^{\prime }=f(m_{\lambda }).$ Then $f(c_{\beta })=r_{g}s_{h}m_{\lambda
}^{\prime }=r_{g}s_{h}f(m_{\lambda })=f(r_{g}s_{h}m_{\lambda })$, and so $%
c_{\beta }-r_{g}s_{h}m_{\lambda }\in Kerf\subseteq C$, it follows that $%
r_{g}s_{h}m_{\lambda }\in C.$ Since $C$ is a graded $A$-2-absorbing
submodule of $M$, there exists $a_{\alpha }\in A$ such that $a_{\alpha
}r_{g}s_{h}\in (C:_{R}M)$ or $a_{\alpha }r_{g}m_{\lambda }\in C$ or $%
a_{\alpha }s_{h}m_{\lambda }\in C.$ Then we have either $a_{\alpha
}r_{g}s_{h}\in (C:_{R}M)\subseteq (f(C):_{R}M^{\prime })$ or $\ a_{\alpha
}r_{g}m_{\lambda }^{\prime }=a_{\alpha }r_{g}f(m_{\lambda })=f(a_{\alpha
}r_{g}m_{\lambda })\in f(C)$ or $a_{\alpha }s_{h}m_{\lambda }^{\prime
}=a_{\alpha }s_{h}f(m_{\lambda })=f(a_{\alpha }s_{h}m_{\lambda })\in f(C).$
Thus $f(C)$ is a graded $A$-2-absorbing submodule of $M^{\prime }.$
\end{proof}

 %----------------------------------------------------
 %_----------------------------------------------------------------------------------------------------------
%-------------------------Example 2.13  -----------------------------------------------------------------------
Let $R$ be a $G$-graded ring, $M$ a graded $R$-module, $A\subseteq h(R)$ a
multiplicatively closed subset of $R$ and $C$ a graded submodule of $M$ with
$(C:_{R}M)\bigcap A=\emptyset.$ We say that $C$ is \textit{a graded }$A$%
\textit{-prime submodule of }$M$ if there exists a fixed $a_{\alpha }\in A$
and whenever $r_{g}m_{\lambda }\in C\ $where $r_{g}\in h(R)$ and $m_{\lambda
}\in h(M),$ implies that either $a_{\alpha }r_{g}\in (C:_{R}M)$ or $a_{\alpha
}m_{\lambda }\in C$ (see \cite{5}).

It is easy to see that every graded $A$-prime submodule
of $M$ is a graded $A$-$2$-absorbing submodule. The following example shows
that the converse is not true in general.
\begin{example}
Let $G=%
%TCIMACRO{\U{2124} }%
%BeginExpansion
\mathbb{Z}
%EndExpansion
_{2}$ and $R=%
%TCIMACRO{\U{2124} }%
%BeginExpansion
\mathbb{Z}
%EndExpansion
$ be a $G$-graded ring with $R_{0}=%
%TCIMACRO{\U{2124} }%
%BeginExpansion
\mathbb{Z}
%EndExpansion
$ and $R_{1}=\{0\}.$ Let $M=%
%TCIMACRO{\U{2124} }%
%BeginExpansion
\mathbb{Z}
%EndExpansion
_{6}$ \ be a graded $R$-module with $M_{0}=%
%TCIMACRO{\U{2124} }%
%BeginExpansion
\mathbb{Z}
%EndExpansion
_{6}\ $ and $M_{1}=\{\bar{0}\}.$ Now, consider the graded submodule $C=\{%
\bar{0}\}\ $of $M,$ then $C$ is not a graded prime submodule since $2\cdot
\bar{3}\in C\ $ where $2\in R_{0}$ and $\bar{3}\in M_{0}$ but $\bar{3}\notin
C\ $and $2\notin (C:_{%
%TCIMACRO{\U{2124} }%
%BeginExpansion
\mathbb{Z}
%EndExpansion
\ }M)=6%
%TCIMACRO{\U{2124} }%
%BeginExpansion
\mathbb{Z}
%EndExpansion
.$ However an easy computation shows that $C$ is a graded 2-absorbing
submodule of $M$. Now let $A$ be the set of units in $R.$ Then $C$ is a
graded $A$-2-absorbing of $M$ but not graded $A$-prime submodule.
\end{example}

%------------------------------------------------------------------------------
%-------------------------------Theorem 2.14----------------------------------
\begin{theorem}
Let $R$ be a $G$-graded ring, $M$ a graded $R$-module
and $A\subseteq h(R)$ be a multiplicatively closed subset of $R.$ Then the
intersection of two graded $A$-prime submodule is a graded $A$-2-absorbing.
\end{theorem}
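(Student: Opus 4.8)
The plan is to let $C_{1}$ and $C_{2}$ be two graded $A$-prime submodules of $M$, set $C = C_{1} \cap C_{2}$, and show $C$ is a graded $A$-2-absorbing submodule. First I would address the colon-ideal hypothesis: since $C_{1}$ and $C_{2}$ are graded $A$-prime, we have $(C_{i} :_{R} M) \cap A = \emptyset$ for $i = 1, 2$, and because $(C :_{R} M) = (C_{1} :_{R} M) \cap (C_{2} :_{R} M) \subseteq (C_{1} :_{R} M)$, we immediately get $(C :_{R} M) \cap A = \emptyset$, so $C$ is at least a candidate for being graded $A$-2-absorbing. Next, by the definition of graded $A$-prime, there is a fixed $a_{1} \in A$ witnessing the $A$-prime property for $C_{1}$ and a fixed $a_{2} \in A$ witnessing it for $C_{2}$; I would take $a_{\alpha} = a_{1} a_{2} \in A$ (using that $A$ is multiplicatively closed, and noting $a_{\alpha} \in h(R)$) as the fixed element meant to witness the graded $A$-2-absorbing property of $C$.

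The core argument then runs as follows. Suppose $r_{g} s_{h} m_{\lambda} \in C$ with $r_{g}, s_{h} \in h(R)$ and $m_{\lambda} \in h(M)$. Then $r_{g} s_{h} m_{\lambda} \in C_{1}$, so regarding $s_{h} m_{\lambda}$ as a homogeneous element of $M$ (it lies in $M_{h\lambda}$), the $A$-prime property of $C_{1}$ gives either $a_{1} r_{g} \in (C_{1} :_{R} M)$ or $a_{1} s_{h} m_{\lambda} \in C_{1}$. Similarly, from $r_{g} s_{h} m_{\lambda} \in C_{2}$ and the $A$-prime property of $C_{2}$, either $a_{2} r_{g} \in (C_{2} :_{R} M)$ or $a_{2} s_{h} m_{\lambda} \in C_{2}$. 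This produces four cases. In the case $a_{1} r_{g} \in (C_{1} :_{R} M)$ and $a_{2} r_{g} \in (C_{2} :_{R} M)$: multiplying the first by $a_{2}$ and the second by $a_{1}$ (a harmless move into each colon ideal) gives $a_{\alpha} r_{g} \in (C_{1} :_{R} M) \cap (C_{2} :_{R} M) = (C :_{R} M)$, hence $a_{\alpha} r_{g} s_{h} \in (C :_{R} M)$, which is the first disjunct of the $A$-2-absorbing conclusion. In the case $a_{1} s_{h} m_{\lambda} \in C_{1}$ and $a_{2} s_{h} m_{\lambda} \in C_{2}$: I would need that $a_{1} s_{h} m_{\lambda} \in C_{1}$ implies $a_{\alpha} s_{h} m_{\lambda} = a_{2}(a_{1} s_{h} m_{\lambda}) \in C_{1}$ (since $C_{1}$ is a submodule) and likewise $a_{\alpha} s_{h} m_{\lambda} \in C_{2}$, hence $a_{\alpha} s_{h} m_{\lambda} \in C$, giving the third disjunct. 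The two mixed cases, say $a_{1} r_{g} \in (C_{1} :_{R} M)$ and $a_{2} s_{h} m_{\lambda} \in C_{2}$, are the subtle ones: here I do not immediately land in $(C :_{R} M)$ or in $C$.

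To handle a mixed case such as $a_{1} r_{g} \in (C_{1} :_{R} M)$ together with $a_{2} s_{h} m_{\lambda} \in C_{2}$, I would go back and re-apply the $A$-prime property, this time reading $r_{g} s_{h} m_{\lambda} \in C_{2}$ as $(r_{g})(s_{h} m_{\lambda}) \in C_{2}$ but also as $(r_{g} s_{h})(m_{\lambda}) \in C_{2}$, i.e. exploit both groupings, and similarly for $C_{1}$. From $a_{1} r_{g} \in (C_{1} :_{R} M)$ we get $a_{1} r_{g} m_{\lambda} \in C_{1}$ for free, so $a_{\alpha} r_{g} m_{\lambda} = a_{2}(a_{1} r_{g} m_{\lambda}) \in C_{1}$; the task is to also place $a_{\alpha} r_{g} m_{\lambda}$ in $C_{2}$, or else place $a_{\alpha} r_{g} s_{h}$ in $(C_{2}:_{R}M)$ or $a_{\alpha} s_{h} m_{\lambda}$ in $C_{2}$. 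Since we already have $a_{2} s_{h} m_{\lambda} \in C_{2}$, multiplying by $a_{1}$ yields $a_{\alpha} s_{h} m_{\lambda} \in C_{2}$, and combined with $a_{\alpha} s_{h} m_{\lambda} = a_{1}(a_{2} s_{h} m_{\lambda})$ — wait, this only lands in $C_{2}$, not $C_{1}$. The clean resolution I expect: from $r_{g} s_{h} m_{\lambda} \in C_{1}$ apply the $C_{1}$-prime property in the grouping $(r_{g} s_{h})(m_{\lambda})$, getting $a_{1} r_{g} s_{h} \in (C_{1} :_{R} M)$ or $a_{1} m_{\lambda} \in C_{1}$; in the first sub-case, combined with the $C_{2}$ side applied in the grouping $(s_{h})(r_{g} m_{\lambda})$ or directly, one assembles $a_{\alpha} r_{g} s_{h} \in (C :_{R} M)$ or $a_{\alpha} s_{h} m_{\lambda} \in C$ as needed; in the second sub-case $a_{1} m_{\lambda} \in C_{1}$ gives $a_{\alpha} r_{g} m_{\lambda}, a_{\alpha} s_{h} m_{\lambda} \in C_{1}$ for free, and one pairs it with the $C_{2}$ information. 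The main obstacle is precisely this bookkeeping across the mixed cases — ensuring that whichever of the three disjuncts ($a_{\alpha} r_{g} s_{h} \in (C :_{R} M)$, $a_{\alpha} r_{g} m_{\lambda} \in C$, $a_{\alpha} s_{h} m_{\lambda} \in C$) one ends up asserting, the chosen element genuinely lies in \emph{both} $C_{1}$ and $C_{2}$ (resp. both colon ideals), which forces a careful case split using both possible two-factor groupings of $r_{g} s_{h} m_{\lambda}$ against each of $C_{1}$ and $C_{2}$, rather than a single application. Everything else is routine submodule arithmetic and the observation that $A$ being multiplicatively closed keeps all the products $a_{1}, a_{2}, a_{1} a_{2}$ inside $A$ and homogeneous.
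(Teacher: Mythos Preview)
Your setup is correct: verifying $(C:_{R}M)\cap A=\emptyset$, choosing $a_{\alpha}=a_{1}a_{2}$, and disposing of the two ``pure'' cases (both colon conditions, or both membership conditions) is fine. The gap is in the mixed case. You correctly flag it as the obstacle, but your paragraph beginning ``The clean resolution I expect'' never actually produces one of the three required disjuncts; phrases like ``one assembles \ldots\ as needed'' and ``one pairs it with the $C_{2}$ information'' are not arguments. In particular, in the mixed case $a_{1}r_{g}\in(C_{1}:_{R}M)$ and $a_{2}s_{h}m_{\lambda}\in C_{2}$, nothing you have written places any of $a_{\alpha}r_{g}s_{h}$, $a_{\alpha}r_{g}m_{\lambda}$, $a_{\alpha}s_{h}m_{\lambda}$ into \emph{both} pieces simultaneously.

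The fix is short once you see it, and it lives on the $C_{2}$ side, not the $C_{1}$ side (re-applying to $C_{1}$ is redundant since $a_{1}r_{g}\in(C_{1}:_{R}M)$ already gives you everything there). Read $r_{g}s_{h}m_{\lambda}\in C_{2}$ as $s_{h}\,(r_{g}m_{\lambda})\in C_{2}$ and apply the $A$-prime property of $C_{2}$: either $a_{2}s_{h}\in(C_{2}:_{R}M)$, whence $a_{\alpha}r_{g}s_{h}\in(C_{2}:_{R}M)$ and, together with $a_{1}r_{g}\in(C_{1}:_{R}M)$, you get $a_{\alpha}r_{g}s_{h}\in(C:_{R}M)$; or $a_{2}r_{g}m_{\lambda}\in C_{2}$, whence $a_{\alpha}r_{g}m_{\lambda}\in C_{2}$ and, together with $a_{1}r_{g}m_{\lambda}\in C_{1}$ (from $a_{1}r_{g}\in(C_{1}:_{R}M)$), you get $a_{\alpha}r_{g}m_{\lambda}\in C$. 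The other mixed case is symmetric.

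For comparison, the paper organises things differently: it applies the $A$-prime property to each $C_{i}$ \emph{twice} (and invokes \cite[Lemma~2.10]{5} to reduce an $a_{i}^{2}$ to $a_{i}$) so as to obtain, for each $i$, the clean trichotomy $a_{i}r_{g}\in(C_{i}:_{R}M)$ or $a_{i}s_{h}\in(C_{i}:_{R}M)$ or $a_{i}m_{\lambda}\in C_{i}$; after that the nine combinations are all one-line checks. Your route avoids the external lemma but forces you to handle the asymmetric mixed case by a second, targeted application on the ``bad'' side.
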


\begin{proof}

Let $C_{1}$ and $C_{2}$ be two graded $A$-prime submodules of $M$ and $%
C=C_{1}\cap C_{1}.$ Let $r_{g}s_{h}m_{\lambda }\in C$ for some $%
r_{g},s_{h}\in h(R)$ and $m_{\lambda }\in h(M).$ Since $C_{1}$ is a graded $%
A $-prime submodule of $M$ \ and $r_{g}(s_{h}m_{\lambda })\in C_{1},$ there
exists $a_{1_{\alpha }}\in A$ such that $a_{1_{\alpha }}r_{g}\in
(C_{1}:_{R}M)$ or $a_{1_{\alpha }}s_{h}m_{\lambda }\in C_{1}.$ If $%
a_{1_{\alpha }}s_{h}m_{\lambda }=s_{h}(a_{1_{\alpha }}m_{\lambda })\in
C_{1}, $ then either $a_{1_{\alpha }}s_{h}\in (C_{1}:_{R}M)$ or $%
a_{1_{\alpha }}^{2}s_{h}\in C_{1}$ since $C_{1}$ is a graded $A$-prime
submodule and hence either $a_{1_{\alpha }}s_{h}\in (C_{1}:_{R}M)$ or $%
a_{1_{\alpha }}s_{h}\in C_{1}$ by \cite[Lemma 2.10]{5}. In a similar manner, since $C_{2}$ is a graded $A$-prime submodule of $M$ \
and $r_{g}s_{h}m_{\lambda }\in C_{2},$ there exists $a_{2_{\alpha }}\in A$
such that $a_{2_{\alpha }}r_{g}\in (C_{1}:_{R}M)$ or $a_{2_{\alpha
}}s_{h}\in (C_{1}:_{R}M)$ $\ $or $a_{2_{\alpha }}m_{\lambda }\in C_{2}.$ Now
put $a_{\beta }=$ $a_{1_{\alpha }}a_{2_{\alpha }}\in A.$ Then either $%
a_{\beta }r_{g}s_{h}\in (C:_{R}M)$ or $a_{\beta }r_{g}\in C$ or $a_{\beta
}s_{h}\in C.$ Therefore, $C$ is a graded 2-absorbing submodule of $M$.
\end{proof}

%\section*{\bf Acknowledgements}The authors wish to thank sincerely the referees for their valuable comments and suggestions.

\bigskip\bigskip\bigskip\bigskip

\end{document}